\title[Lengths of closed geodesics on random surfaces of large genus]{Lengths of closed geodesics\\ on random surfaces of large genus}
\author{Maryam Mirzakhani and Bram Petri}
\date{\today}
\newtheorem{thm}{Theorem}[section]
\newtheorem{prp}[thm]{Proposition}
\newtheorem{lem}[thm]{Lemma}
\newenvironment{thmrep}[1]
  {\innerthmrep}
  {\endinnerthmrep}
\theoremstyle{definition}
\newcommand{\nc}{\newcommand}
\nc{\dmo}{\DeclareMathOperator}
\nc{\abs}[1]{\left| #1 \right|}
\nc{\bigO}[1]{\mathcal{O}\left(#1\right)}
\nc{\card}[1]{\left|#1\right|}
\nc{\ceil}[1]{\left\lceil #1 \right\rceil}
\nc{\CC}{\mathbb{C}}
\nc{\floor}[1]{\left\lfloor #1 \right\rfloor}
\nc{\ZZ}{\mathbb{Z}}
\nc{\len}[1]{\left| #1 \right|}
\nc{\littleo}[1]{o\left(#1\right)}
\dmo{\Mat}{Mat}
\nc{\NN}{\mathbb{N}}
\nc{\norm}[1]{\left|\left| #1 \right|\right|}
\nc{\QQ}{\mathbb{Q}}
\nc{\RR}{\mathbb{R}}
\nc{\st}[2]{\left\{ #1 ;\; #2\right\}}
\dmo{\supp}{supp}
\nc{\tr}[1]{\mathrm{tr}\left(#1\right)}
\dmo{\area}{area}
\dmo{\conv}{conv}
\dmo{\diam}{diam}
\dmo{\dist}{\mathrm{d}}
\nc{\HH}{\mathbb{H}}
\dmo{\MCG}{MCG}
\dmo{\MPL}{MPL}
\dmo{\Mod}{\mathcal{M}}
\dmo{\PL}{PL}
\nc{\Sphere}{\mathbb{S}}
\dmo{\sys}{sys}
\dmo{\Teich}{\mathcal{T}}
\nc{\Torus}{\mathbb{T}}
\dmo{\vol}{vol}
\dmo{\WP}{WP}
\dmo{\convTV}{\;\stackrel{\mathrm{TV}}{\longrightarrow}\;}
\nc{\ExV}[2]{\mathbb{E}_{#1}\left[#2\right]}
\dmo{\EE}{\mathbb{E}}
\nc{\Pro}[2]{\mathbb{P}_{#1}\left[#2\right]}
\dmo{\PP}{\mathbb{P}}
\nc{\distTV}[2]{\mathrm{d}_{\rm TV}\left(#1,#2\right)}
\dmo{\UU}{\mathbb{U}}
\nc{\Var}[2]{\mathbb{V}\mathrm{ar}_{#1}\left[#2\right]}
\dmo{\alt}{\mathfrak{A}}
\dmo{\Aut}{Aut}
\dmo{\Fix}{Fix}
\dmo{\Hom}{Hom}
\dmo{\PSL}{PSL}
\dmo{\Rep}{Rep}
\dmo{\sym}{\mathfrak{S}}
\begin{document}

\begin{abstract} We prove Poisson approximation results for the bottom part of the length spectrum of a random closed hyperbolic surface of large genus. Here, a random hyperbolic surface is a surface picked at random using the Weil-Petersson volume form on the corresponding moduli space. As an application of our result, we compute the large genus limit of the expected systole. 
\end{abstract}

\maketitle

\section{Introduction}

In this paper, we study the distribution of short closed geodesics on random hyperbolic surfaces. Our definition of a random surface is as follows. First of all, we consider for every $g\geq 2$ the moduli space $\Mod_g$ of closed hyperbolic surfaces of genus $g$. The corresponding Teichm\"uller space $\Teich_g$ comes with a symplectic form $\omega_g$, called the Weil-Petersson symplectic from. The associated volume form descends to $\Mod_g$ and is of finite total volume. This means that we obtain a probability measure $\PP_g$ on $\Mod_g$ by defining
\[\PP_g[A] = \frac{\vol_{\WP}(A)}{\vol_{\WP}(\Mod_g)}\]
for every measurable set $A\subseteq \Mod_g$, where $\vol_{\WP}(A)$ denotes the Weil-Petersson volume of $A$. Our main goal is now to combine methods from probability theory and Weil-Petersson geometry to estimate probabilities of the form 
\[\PP_g[X\in \Mod_g \text{ has }k\text{ closed geodesics of length }\leq L].\]
In particular, we will compute the large genus limits of these probabilities. For example, we determine which proportion of the Weil-Petersson volume is asymptotically taken up by the $\varepsilon$-thin part of moduli space. 

\subsection*{New results} Before we state our results, we need to set up some notation. Given $X\in\Mod_g$ and an interval $[a,b]\subset \RR_+$, let $N_{g,[a,b]}(X)$ denote the number of primitive closed geodesics on $X$ with lengths in the given interval. Note that in our setup 
\[N_{g,[a,b]}:\Mod_g\to\NN\]
may be interpreted as a random variable. 

In Section \ref{sec_lengthspectrum} we prove:
\begin{thmrep}{\ref{thm_lengthspectrum}} Let $[a_1,b_1], [a_2,b_2], \ldots [a_k, b_k] \subset \RR_+$ be disjoint intervals. Then, as $g\to \infty$, the vector of random variables
\[ (N_{g,[a_1,b_1]},\ldots, N_{g,[a_k,b_k]}):\Mod_g \to \NN^k \]
converges jointly in distribution to a vector of independent Poisson distributed random variables with means $\lambda_{[a_i,b_i]}$, where
\[ \lambda_{[a_i,b_i]} = \int_{a_i}^{b_i} \frac{e^t+e^{-t}-2}{2t}dt. \]
for $i=1,\ldots,k$.
\end{thmrep}
Recall that a random variable $Z:\Omega\to\NN$ on a probability space $\Omega$ is said to be Poisson distributed with mean $\lambda\in (0,\infty)$ if $\PP[Z=k] = \lambda^k e^{-\lambda} / k!$ for all $k\in\NN$. This means that Theorem \ref{thm_lengthspectrum} allows for the explicit computation of large genus limits of the probabilities we are after. For example, we have
\[\lim_{g\to\infty} \PP_g[\sys(X) \leq \varepsilon]  = 1 - e^{-\lambda_{[0,\varepsilon]}} \sim 1-e^{-\varepsilon^2 / 4}\sim \frac{\varepsilon^2}{4} \]
as $\varepsilon\to 0$.

As an application of Theorem \ref{thm_lengthspectrum}, we study the distribution of the \emph{systole} - the length of the shortest closed geodesic - of a random surface. Recall that the expected value of a random variable $Z:\Mod_g \to \RR$ is given by 
\[\EE_g[Z] = \frac{\int_{\Mod_g}Z(X) dX}{\int_{\Mod_g} dX},\]
where the integrals are with respect to the Weil-Petersson volume form. We show (Section \ref{sec_systole}):
\begin{thmrep}{\ref{thm_systole}} Given $X\in\Mod_g$, let $\sys(X)$ denote its systole. We have:
\[\lim\limits_{g\to\infty} \EE_g[\sys] = \int_0^\infty e^{-\lambda_{[0,R]}}dR =1.61498\ldots.\]
\end{thmrep}

\subsection*{Notes and references} The number of closed geodesics up to a given length on a hyperbolic surface has been investigated by many authors. A classical result due to Delsarte, Huber and Selberg states that for any closed hyperbolic surface $X$ we have
\[N_{g,[0,L]}(X) \sim e^{L}/L\]
as $L\to \infty$, see for instance \cite{Bus} for details. More recently, the number of \emph{simple} closed geodesics of length $\leq L$ was shown to be asymptotic to $c_X\cdot L^{6g-6}$ \cite{Mir5}, where $c_X$ is a constant depending only on $X$. Similar results are known to hold for the number of curves up to a given length in a fixed mapping class group orbit \cite{Mir6,ES}. 

The study of the Weil-Petersson geometry of moduli spaces of surfaces of large genus also has a long history. Estimates on Weil-Petersson volumes of moduli spaces of surfaces of large genus were derived by Penner \cite{Pen}, Grushevsky \cite{Gru}, Schumacher and Trappani \cite{ST} and Mirzkhani and Zograf \cite{MZ}. The large genus behavior of the Weil-Petersson diameter has been exhibited by Cavendish and Parlier \cite{CP},
the growth of the number of balls needed to cover the thick part of moduli space by Fletcher, Kahn and Markovic \cite{FKM} and the injectivity radius by Wu \cite{Wu}.

Closed geodesics on random surfaces have been studied in \cite{Mir3,Mir4,GPY}. In \cite{Mir3}, it was for example proven that for every fixed genus $g$, $\PP[X\in\Mod_g \text{ has }\sys(X)<\varepsilon]$ decays like $\varepsilon^2$ as $\varepsilon\to 0$, independently of the genus. In \cite{Mir4}, the large genus limits of the expected values $\EE_g[N_{g,[0,L]}]$ were computed and it was also already shown that the expected systole is bounded independently of the genus (see Section \ref{sec_backgroundrandom} and Appendix \ref{app_workmaryam} for more details). In \cite{GPY}, Guth, Parlier and Young proved that random surfaces do not admit pants decompostions of total length less than roughly $g^{7/6}$. This contrasts Theorem \ref{thm_systole} above, which guarantees that the probability that the surface has short curves does not tend to zero. These short curves will however generically not form a pants decomposition.

\subsection*{Idea of the proof} The proof of Theorem \ref{thm_lengthspectrum} relies on a combination of the method of moments for Poisson approximation and results on Weil-Petersson volumes of large genus surfaces. Our final goal in proving Theorem \ref{thm_lengthspectrum} is to control the factorial moments of the functions $N_{g,[a,b]}:\Mod_g\to \NN$. These are expressions of the form
\[\EE[N_{g,[a,b]}(N_{g,[a,b]}-1) (N_{g,[a,b]}-2)\cdots (N_{g,[a,b]}-k+1)],\]
for $k\in\NN$. Indeed, it is the content of the method of moments that once these moments are shown to converge to the moments of a Poisson distributed random variable, Theorem \ref{thm_lengthspectrum} follows. 

To control these moments, our first step is to use the integration techniques developed in \cite{Mir1}. These allow us to express the joint factorial moments in terms of Weil-Petersson volumes of moduli spaces of hyperbolic surfaces with boundary components. These are defined as follows. Given $L=(L_1,\ldots,L_n) \in \RR_+^n$, we let $\Mod_{g,n}(L)$ be the moduli space of hyperbolic surfaces of genus $g$ with $n$ boundary components of lengths $L_1,\ldots L_n$ respectively. The corresponding Teichm\"uller space $\Teich_{g,n}(L)$ also carries a symplectic form $\omega_{g,n}$, called the Weil-Petersson symplectic form \cite{Gol,Wol1}, that descends to $\Mod_{g,n}(L)$. It turns out that the Weil-Petersson volume $V_{g,n}(L)$ of $\Mod_{g,n}(L)$ is a polynomial in $L_1^2,\ldots,L_n^2$ of degree $3g+n-3$ \cite{Mir1}. Moreover, the coefficients of these polynomials can be computed in terms of integrals of the form
\[ \int_{\overline{\Mod}_{g,n}} \psi_1^{d_1}\cdots \psi_n^{d_n}\omega_{g,n}^{3g+n-3-\sum_{i=1}^n d_i}, \]
where $\psi_i\in H^2(\Mod_{g,n},\QQ)$ is the first Chern class of the $i^{th}$ tautological line bundle on $\Mod_{g,n}$ for all $1\leq i \leq n$ \cite{Mir2} (see Section \ref{sec_backgroundIntersection}). Using bounds on Weil-Petersson volumes and integrals of these Chern classes from \cite{Mir3, MZ}, we can single out the main contributions to the joint factorial moments and prove the convergence we are after. Finally, Theorem \ref{thm_systole} follows from Theorem \ref{thm_lengthspectrum} together with a dominated convergence argument.

\subsection*{Remark}
In \cite{BM}, Brooks and Makover defined a more combinatorial model for random hyperbolic surfaces based on Bely\v{\i} surfaces. The idea of this model is to glue an even number of ideal hyperbolic triangles together along their sides without shear and then conformally compactify the resulting surface. As such, this model gives rise to closed hyperbolic surfaces and it follows from a theorem due to Bely\v{\i} \cite{Bel} that the resulting set of surfaces is dense in $\Mod_g$ for every $g\geq 2$.

If we denote the number of triangles by $2N$, then the expected genus of these surfaces is asymptotic to $N/2$ (see \cite{BM,Gam}). It thus seems natural to compare the geometry of these random surfaces to that of the random surfaces studied in this article of the corresponding genus. It turns out that many of the known results are very similar, even though no a priori reason for this is known.

The number of short curves is also known to be asymptotically Poisson distributed in the model introduced by Brooks and Makover \cite{Pet,PT}. Moreover, the expected systole also converges to a constant (as $N\to\infty$) \cite{Pet}. On the other hand, both the means of the corresponding Poisson distribution and the limit of the expected systole are slightly different. 

Finally, it follows from Brooks and Makover's results \cite{BM} that their random surfaces have the property that
\[ \PP_N[\sys \geq b] \to 1\]
as $N\to\infty$, where $b=2\cdot\cosh^{-1}(3/2)$. In other words, the probability measure from Brooks and Makover's model asymptotically concentrates in the $b$-thick part of moduli space. Instead, Theorem \ref{thm_lengthspectrum} immediately implies that in our setting
\[\lim_{g\to\infty}\PP_g[\sys \geq b] = \lim_{g\to\infty}\PP_g[N_{g,[0,b]}=0] = e^{-\lambda_{[0,b]}} = 0.339043\ldots.\]

\subsection*{About this paper}
The mathematical content of this paper is a joint project. However, the writing was done by the second author and completed after Maryam Mirzakhani passed. As such, she wasn't able to consent to its publication.

Part of this work uses unpublished results by Maryam Mirzakhani. Because these results are currently not available elsewhere, Appendix \ref{app_workmaryam} contains a brief sketch on how to prove the results we need. We stress that the results in this appendix are due to Mirzakhani.

\subsection*{Acknowledgement} The second named author acknowledges support from the ERC Advanced Grant ``Moduli'' and the Max Planck Institute for Mathematics in Bonn. He would like to thank Alex Wright for useful discussions. He would also like to thank Stanford University for its hospitality when he visited in spring 2016. It was a great privilege spending a week discussing mathematics with Maryam.

\section{Background}

In this section we present some background material and set up notation. For more details, we refer the reader to \cite{Bus,Mir1,Wol2}.

\subsection{Teichm\"uller and moduli spaces}
In what follows, $\Sigma_{g,n}$ will denote a surface of genus $g$ and $n$ boundary components. We will write $\Sigma_g=\Sigma_{g,0}$.

Given $g,n \in \NN$ and $L=(L_1,\ldots,L_n)\in\RR_+$, we define the Teichmu\"uller space $\Teich_{g,n}(L)$ (or $\Teich(\Sigma_{g,n},L)$) to be the space 
\[\Teich_{g,n}(L)=\st{(X,f)}{\begin{array}{c} X\text{ a complete hyperbolic surface with totally} \\ \text{geodesic boundary components, whose lengths}  \\
 \text{are given by }L, f:\Sigma_{g,n}\to X\text{ a diffeomorphism}\end{array}}/\sim. \]
Here, the equivalence relation is given by:
\[(X,f)\sim (X',f') \text{ if and only if } f'\circ f^{-1}: X\to X' \]
is isotopic to an isometry. When $L_i=0$ then the corresponding boundary component is assumed to be a cusp. We set $\Teich_{g,n}=\Teich_{g,n}(0,\ldots,0)$, $\Teich_g = \Teich_{g,0}$ and
\[\Teich\left(\bigsqcup_{i=1}^k \Sigma_{g_i,n_i},L(1),\ldots, L(k)\right) = \prod_{i=1}^k \Teich_{g_i,n_i}(L(i)),\]
for all $L(1)\in\RR_+^{n_1},\ldots,L(k)\in\RR_+^{n_k}$.

The mapping class group $\MCG(\Sigma_{g,n})$ of isotopy classes of orientation preserving diffeomorphisms that setwise fix the boundary acts on $\Teich_{g,n}(L)$ and the quotient
\[\Mod_{g,n}(L) = \Teich_{g,n}(L) / \MCG(\Sigma_{g,n})\]
is the moduli space of Riemann surfaces homeomorphic to $\Sigma_{g,n}$. We will again write $\mathcal{M}_{g,n}=\mathcal{M}_{g,n}(0,\ldots,0)$, $\mathcal{M}_g = \mathcal{M}_{g,0}$ and
\[\Mod\left(\bigsqcup_{i=1}^k \Sigma_{g_i,n_i},L(1),\ldots,L(k)\right) = \prod_{i=1}^k \Mod_{g_i,n_i}(L(i)),\]
for all $L(1)\in\RR_+^{n_1},\ldots,L(k)\in\RR_+^{n_k}$.

Goldman \cite{Gol} showed that the space $\Teich_{g,n}(L)$ carries a $\MCG(\Sigma_{g,n})$-invariant symplectic form $\omega_{g,n}$, called the Weil-Petersson symplectic form. When the lengths $L_i=0$ for all $1\leq i\leq n$, then  $\omega_{g,n}$ is a symplectic form coming from a K\"ahler metric on $\Mod_{g,n}$ (see \cite{IT} for details).

\subsection{Length functions and Fenchel-Nielsen coordinates} Classical hyperbolic geometry (see for instance \cite[Theorem 1.6.6.]{Bus}) implies that in the free homotopy class of a (non null- or boundary-homotopic) simple closed curve on a hyperbolic surface there exists a unique simple closed geodesic which minimizes length over the homotopy class. As such, every free homotopy class $\alpha$ of non-trivial simple closed curves on $\Sigma_{g,n}$ defines a function $\ell_\alpha:\Teich_{g,n}(L)\to \RR_+$ that assigns to $X$ the length of the unique geodesic on $X$ freely homotopic to $\alpha$.

Let $\mathcal{P}=\{\alpha_i\}_{i=1}^{3g+n-3}$ be a pants decomposition - a set of simple closed curves so that $\Sigma_{g,n}\smallsetminus\cup_i \alpha_i$ is a disjoint union of three-holed spheres (pairs of pants) - of $\Sigma_{g,n}$. Given a point $X\in\Teich_{g,n}(L)$, $\mathcal{P}$ is homotopic to a unique set of simple closed geodesics on $X$. This allows us to assign two real numbers to each curve $\alpha_i$ in $\mathcal{P}$: the length $\ell_{\alpha_i}(X)$ of the corresponding geodesic and the twist $\tau_{\alpha_i}(X)$ of the gluing at that geodesic. For any pants decomposition $\mathcal{P}$ the map 
\[\Teich_{g,n}(L) \to \left(\RR_+\times \RR\right)^\mathcal{P}\]
given by
\[X\mapsto (\ell_{\alpha_i}(X),\tau_{\alpha_i}(X))_{i=1}^{3g+n-3}\]
is a global coordinate system for $\Teich_{g,n}(L)$, called \emph{Fenchel-Nielsen coordinates} (see for instance \cite[Chapter 6]{Bus}). In \cite{Wol3}, Wolpert proved that $\omega_{g,n}$ has a particularly nice form in these coordinates:
\begin{thm}\label{thm_wolpert} \cite{Wol3} The Weil-Petersson symplectic form on $\Teich_{g,n}(L)$ is given by
\[ \omega_{g,n} = \sum_{i=1}^{3g+n-3} d\ell_{\alpha_i}\wedge d\tau_{\alpha_i}.\] 
\end{thm}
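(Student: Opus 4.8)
The plan is to deduce Wolpert's formula from his earlier \emph{twist--length duality}: for every simple closed geodesic $\gamma$ on a point of $\Teich_{g,n}(L)$, the Fenchel--Nielsen twist vector field $t_\gamma$ (the infinitesimal generator of the twist flow along $\gamma$) is, up to a universal constant, the Hamiltonian vector field of the length function $\ell_\gamma$ with respect to $\omega_{g,n}$; with the conventions above this reads $\iota_{t_\gamma}\omega_{g,n} = -\,d\ell_\gamma$. I would either import this as a known result or sketch its proof from the first variation of $\ell_\gamma$ under an earthquake together with the definition of $\omega_{g,n}$. Fixing the pants decomposition $\mathcal{P}=\{\alpha_i\}$, the goal then becomes to show that $\omega_{g,n}$ equals $\eta := \sum_{i=1}^{3g+n-3} d\ell_{\alpha_i}\wedge d\tau_{\alpha_i}$.

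First I would reduce to a statement about the ``length directions.'' In Fenchel--Nielsen coordinates the fields $\partial/\partial\ell_{\alpha_i}$ and $\partial/\partial\tau_{\alpha_i}$ frame the tangent bundle of $\Teich_{g,n}(L)$, and by construction $t_{\alpha_j} = \partial/\partial\tau_{\alpha_j}$. Since the length functions $\ell_{\alpha_i}$ do not depend on the twist coordinates, contracting $\eta$ gives $\iota_{\partial/\partial\tau_{\alpha_j}}\eta = -\,d\ell_{\alpha_j}$, which by twist--length duality equals $\iota_{\partial/\partial\tau_{\alpha_j}}\omega_{g,n}$. Hence the closed $2$-form $\beta := \omega_{g,n}-\eta$ is annihilated by contraction with every $\partial/\partial\tau_{\alpha_j}$; expanding $\beta$ in the coordinate frame, this forces $\beta = \sum_{i<j} c_{ij}\, d\ell_{\alpha_i}\wedge d\ell_{\alpha_j}$ for some functions $c_{ij}$, i.e.\ $\beta$ is pulled back from the space of length parameters. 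Closedness of $\beta$ moreover shows that the $c_{ij}$ depend only on the lengths $\ell_{\alpha_1},\dots,\ell_{\alpha_{3g+n-3}}$.

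It remains to show $\beta=0$, and this is where the real work lies. I see two routes. The first is to feed twist--length duality curves $\gamma$ transverse to $\mathcal{P}$: for such a $\gamma$ the field $t_\gamma$ has nontrivial components in the $\partial/\partial\ell_{\alpha_i}$ directions, computable via Wolpert's twist--derivative (cosine) formula $\partial\ell_{\alpha_i}/\partial\tau_\gamma = \sum_{p\in\alpha_i\cap\gamma}\cos\theta_p$; comparing $\iota_{t_\gamma}\omega_{g,n} = -d\ell_\gamma$ with $\iota_{t_\gamma}\eta$ term by term then yields enough linear relations on the $c_{ij}$ to conclude they vanish. The second route is an induction on the complexity $3g+n-3$: cutting along $\alpha_1$ produces a (possibly disconnected) surface of strictly smaller complexity with two extra boundary geodesics of equal length $\ell_{\alpha_1}$, and one proves an additivity statement expressing $\omega_{g,n}$ as the pullback of the Weil--Petersson form of the cut surface plus the single term $d\ell_{\alpha_1}\wedge d\tau_{\alpha_1}$, the base case (a one-holed torus or four-holed sphere, where $\Teich_{g,n}(L)$ is $2$-dimensional) being checked by hand. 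In either case, pinning down the residual length-direction component $\beta$ --- the part invisible to the twist fields of $\mathcal{P}$ alone --- is the main obstacle: the contraction argument of the previous paragraph is essentially formal, but genuinely new geometric input (the cosine formula, or the cutting/gluing behaviour of $\omega_{g,n}$) is needed to eliminate $\beta$.
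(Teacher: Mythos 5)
The paper does not prove this statement: Theorem~\ref{thm_wolpert} is stated as a black-box background result and cited to Wolpert [Wol3], so there is no ``paper's own proof'' against which to compare your attempt. Judged on its own terms, your outline correctly identifies the skeleton of a proof: twist--length duality as the principal geometric input, the observation that $t_{\alpha_j}=\partial/\partial\tau_{\alpha_j}$ and $\ell_{\alpha_i}$ is twist-independent, and the formal contraction argument showing that the closed $2$-form $\beta=\omega_{g,n}-\eta$ must be of the pure type $\sum_{i<j}c_{ij}(\ell)\,d\ell_{\alpha_i}\wedge d\ell_{\alpha_j}$. That reduction is sound, and your first route to kill $\beta$ --- pairing duality for curves transverse to $\mathcal{P}$ with Wolpert's cosine formula $\partial\ell_{\alpha_i}/\partial\tau_\gamma=\sum_p\cos\theta_p$, which also furnishes the twist-derivative reciprocity $t_\alpha\ell_\beta=t_\beta\ell_\alpha$ --- is in fact essentially Wolpert's own strategy in [Wol3]. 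Your second route (cutting along $\alpha_1$ and appealing to additivity of $\omega_{g,n}$ under decomposition) is plausible, but the additivity statement is itself a nontrivial fact about Weil--Petersson geometry that is usually \emph{deduced} from the Fenchel--Nielsen formula rather than proved independently; as stated it risks circularity unless you specify an independent proof of that lemma.

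Two small caveats worth flagging. First, the assertion that $\eta$ is well defined requires that the twist coordinate be normalized by hyperbolic arclength (so $t_{\alpha_i}=\partial/\partial\tau_{\alpha_i}$ on the nose) and that different choices of twist origin differ only by constants or $\ell_i$-translates, not by arbitrary functions of the lengths; otherwise $d\tau_i$, and hence $\eta$, would not be canonical, and the claim $\beta=0$ would not even be coordinate-independent. Second, you are candid that the elimination of $\beta$ is ``where the real work lies,'' and indeed the proposal stops at a sketch there: for a complete proof, Route~1 would need the reciprocity relations derived from the cosine formula worked out precisely (with consistent signs) to produce enough linear relations to force $c_{ij}\equiv 0$. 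As a reconstruction of the structure of Wolpert's argument, the proposal is correct; as a self-contained proof it is incomplete, which you acknowledge.
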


\subsection{Integrating geometric functions on $\mathcal{M}_{g,n}$}
First of all, let us write $V_{g,n}(L)$ for the Weil-Petersson volume of $\Mod_{g,n}(L)$. That is
\[V_{g,n}(L) = \int_{\Mod_{g,n}(L)} \frac{\wedge^{3g+n-3}\omega_{g,n}}{(3g+n-3)!} .\]
We will write $V_{g,n} = V_{g,n}(0,\ldots,0)$ and $V_g=V_{g,0}$.

Given a function $F:\mathbb{R}_+^k\to\mathbb{R}$ and a $k$-tuple of curves $\Gamma=(\gamma_1,\ldots,\gamma_k)$ on $\Sigma_{g,n}$ we define
\[F^\Gamma:\Mod_g\to\mathbb{R}  \]
by
\[F^\Gamma(X)= \sum_{(\alpha_1,\ldots,\alpha_k)\in \MCG(\Sigma_g)\cdot\Gamma}F(\ell_X(\alpha_1),\ldots,\ell_X(\alpha_k)).\]

Let $N(\Gamma)\subset \Sigma_g$ denote a regular neighborhood of $\Gamma$. If $\Sigma_{g}\setminus N(\Gamma) = \bigsqcup\limits_{i=1}^r\Sigma_{g_i,n_i}$ and $x\in\mathbb{R}_+^k$ then we write
\[V_{g}(\Gamma,x) = \prod_{i=1}^r V_{g_i,n_i}(x_{i,1},\ldots,x_{i,n_i})\]
where the $x_{i,s}$ are so that when the boundary of $\Sigma_{g_i,n_i}$ consists of the curves $\{\gamma_{t_1},\ldots \gamma_{t_{n_i}}\}$ then $\{x_{i,s}\}_s=\{x_{t_s}\}_s$. Note that every $x_i$ appears twice.

The integral of $F^\Gamma$ over $\Mod_g$ can now be computed as follows:
\begin{thm} \label{thm_integration} \cite{Mir1} Given $\Gamma=(\gamma_1,\ldots,\gamma_k)$ a $k$-tuple of simple closed curves on $\Sigma_{g}$ and $F:\mathbb{R}_+^k\to\mathbb{R}$, we have
\[\int_{\Mod_g} F^\Gamma(X) dX = C_\Gamma\cdot \int_{\mathbb{R}_+^k}F(x)V_{g,n}(\Gamma,x) x_1\cdots x_k dx_1\wedge \cdots \wedge dx_k,\]
where $C_\Gamma$ is a constant depending on $\Gamma$ only. If $\Sigma_g\setminus\Gamma$ is connected, then $C_\Gamma=2^{-k}$.
\end{thm}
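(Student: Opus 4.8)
The plan is to prove Theorem~\ref{thm_integration} by an \emph{unfolding} argument, reducing it to Wolpert's formula (Theorem~\ref{thm_wolpert}). Throughout we may assume $\Gamma=(\gamma_1,\dots,\gamma_k)$ consists of pairwise disjoint, pairwise non-isotopic essential simple closed curves realized by disjoint simple geodesics on each $X$, since the locus where this fails, together with the locus of degenerate surfaces, has Weil--Petersson measure zero. Let $\mathrm{Stab}(\Gamma)\leq\MCG(\Sigma_g)$ be the subgroup preserving the ordered tuple of isotopy classes $\Gamma$, and let $\Mod_g^\Gamma=\Teich_g/\mathrm{Stab}(\Gamma)$ with covering map $\pi_\Gamma\colon\Mod_g^\Gamma\to\Mod_g$. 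Since the summation defining $F^\Gamma$ runs over the orbit $\MCG(\Sigma_g)\cdot\Gamma\cong\MCG(\Sigma_g)/\mathrm{Stab}(\Gamma)$, and since each $\ell_{\gamma_i}$ descends to $\Mod_g^\Gamma$, the function $f_\Gamma(X)=F(\ell_X(\gamma_1),\dots,\ell_X(\gamma_k))$ is well defined on $\Mod_g^\Gamma$ and $F^\Gamma$ pulls back to $\sum_{[\phi]}f_\Gamma\circ\phi$ on $\Teich_g$. As the Weil--Petersson volume form is $\MCG(\Sigma_g)$-invariant (Goldman), it descends compatibly to $\Mod_g$ and $\Mod_g^\Gamma$, and the unfolding identity gives
\[\int_{\Mod_g}F^\Gamma(X)\,dX=\int_{\Mod_g^\Gamma}f_\Gamma(X)\,dX.\]

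Next I would evaluate the right-hand side in Fenchel--Nielsen coordinates adapted to $\Gamma$. Complete $\Gamma$ to a pants decomposition $\mathcal P=\{\gamma_1,\dots,\gamma_k\}\cup\{\beta_j\}_j$ so that the $\beta_j$ restrict to pants decompositions of the complementary pieces $\Sigma_{g_i,n_i}$ of $\Sigma_g\setminus N(\Gamma)$. Theorem~\ref{thm_wolpert} then writes the volume form on $\Teich_g$, in the coordinates $(\ell_{\gamma_i},\tau_{\gamma_i})_i$ and $(\ell_{\beta_j},\tau_{\beta_j})_j$, as $\bigwedge_i d\ell_{\gamma_i}\wedge d\tau_{\gamma_i}\wedge\bigwedge_j d\ell_{\beta_j}\wedge d\tau_{\beta_j}$. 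Up to a finite group, $\mathrm{Stab}(\Gamma)$ is generated by the Dehn twists $T_{\gamma_i}$ and by $\MCG(\Sigma_g\setminus N(\Gamma))=\prod_i\MCG(\Sigma_{g_i,n_i})$. Quotienting the $(\ell_{\beta_j},\tau_{\beta_j})$ by the mapping class group of each piece reconstructs, via Theorem~\ref{thm_wolpert} applied to that piece, the moduli space $\Mod_{g_i,n_i}$ with boundary lengths given by the adjacent $\ell_{\gamma_t}$, together with its Weil--Petersson volume form; integrating these coordinates out yields the factor $V_g(\Gamma,x)$ with $x_i=\ell_{\gamma_i}$, each $x_i$ occurring twice because $\gamma_i$ borders two (possibly equal) pieces. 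Since $T_{\gamma_i}$ acts by $\tau_{\gamma_i}\mapsto\tau_{\gamma_i}+\ell_{\gamma_i}$, integrating $\tau_{\gamma_i}$ over a fundamental domain $[0,\ell_{\gamma_i})$ contributes the factor $x_1\cdots x_k$.

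What remains — and where the real work lies — is the residual finite group and the constant $2^{-M(\Gamma)}$. For a generic curve of $\Gamma$ this group contributes nothing new. The exceptional case is when $\gamma_i$ separates off a one-handle, i.e.\ bounds a one-holed torus $T$: the elliptic involution of $T$, after an isotopy making it the identity on $\partial T$, lies in $\mathrm{Stab}(\Gamma)$, acts trivially on all length functions and on the Teichm\"uller factors of all pieces, and — modulo $T_{\gamma_i}$ — generates an extra $\ZZ/2\ZZ$ acting on $\tau_{\gamma_i}$ as a half-twist; hence for each such curve $\tau_{\gamma_i}$ effectively ranges only over $[0,\ell_{\gamma_i}/2)$, halving the corresponding factor. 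Collecting these over the $M(\Gamma)$ one-handle curves produces $2^{-M(\Gamma)}$, and combining this with the previous paragraph gives the claimed identity. The main obstacle is precisely this last step: a clean description of $\mathrm{Stab}(\Gamma)$ and a proof that a fractional twist along $\gamma_i$ extends over a complementary subsurface exactly when that subsurface is a one-holed torus — equivalently, that the one-holed torus is the only complementary piece whose automorphism group acts nontrivially on a boundary twist while acting trivially on the piece's moduli. By comparison, the bookkeeping of which lengths appear twice in $V_g(\Gamma,x)$ and the measure-zero exclusions in the unfolding step are routine.
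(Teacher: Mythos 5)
The paper does not prove this theorem; it is quoted from Mirzakhani's work (\cite{Mir1}, with the ordered-tuple form and the $2^{-M(\Gamma)}$ factor appearing in \cite{Mir3}). So there is no in-paper proof to compare against. That said, your sketch is essentially Mirzakhani's own unfolding argument: pass to the intermediate cover $\Teich_g/\mathrm{Stab}(\Gamma)$, use Fenchel--Nielsen coordinates adapted to a pants decomposition extending $\Gamma$, apply Wolpert's formula to split the volume form, quotient the auxiliary coordinates by the mapping class groups of the complementary pieces to recover $V_g(\Gamma,x)$, and integrate each $\tau_{\gamma_i}$ over a fundamental domain for the Dehn twist to pick up $x_1\cdots x_k$. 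This is the right strategy.

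The one place where you correctly flag that you have not finished the argument is precisely where the real content of the constant lies: identifying $\mathrm{Stab}(\Gamma)$ up to the cut-surface mapping class group and the twist lattice, and showing that the only residual symmetry contributing a nontrivial factor is the elliptic involution of a complementary one-holed torus, giving $2^{-M(\Gamma)}$. Two caveats you should keep in mind when filling this in. First, this step interacts with labeling conventions for boundary components of $\Mod_{g,n}$ (and for surfaces with repeated boundary lengths or homeomorphic complementary pieces); the clean way to organize it is via the standard exact sequence relating $\mathrm{Stab}(\Gamma)$ to $\ZZ^k$ (the twists) and the mapping class group of $\Sigma_g$ cut along $\Gamma$, keeping track of which permutations of boundary components and pieces are permitted. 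Second, the elliptic involution is already an element of $\MCG(\Sigma_{1,1})$ — it is central and acts trivially on $\Teich_{1,1}$ — so the factor $1/2$ is not an ``extra'' symmetry you adjoin but a consequence of the kernel of the $\MCG(\Sigma_{1,1})$-action on $\Teich_{1,1}$ lifting to a map of $\Sigma_g$ that shifts $\tau_{\gamma_i}$ by $\ell_{\gamma_i}/2$; you should also verify that no other complementary piece (e.g.\ a four-holed sphere arising from several curves of $\Gamma$) produces an analogous contribution under the ordered-tuple convention for $\mathrm{Stab}(\Gamma)$. These are exactly the points Mirzakhani treats carefully in \cite{Mir1}, so your sketch is a faithful outline of that proof rather than an independent one.
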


\subsection{Connection with intersection numbers}\label{sec_backgroundIntersection}

It turns out that Weil-Petersson volumes of moduli spaces of surfaces with boundary components can be related to intersection numbers on $\overline{\Mod}_{g,n}$. These numbers are defined using the so-called tautological line bundles $\mathcal{L}_1,\ldots,\mathcal{L}_n$ on  $\overline{\Mod}_{g,n}$. The fiber of the bundle $\mathcal{L}_i$ at $X\in\overline{\Mod}_{g,n}$ is the cotangent space at the $i^{th}$ marked point on $X$. 

Now let $\psi_i=c_1(\mathcal{L}_i) \in H^2(\Mod_{g,n},\QQ)$ denote the first Chern class of the $i^{th}$ tautological line bundle. Given $d=(d_1,\ldots,d_n)\in\NN^n$, we will write \linebreak $\abs{d}=d_1+\ldots+d_n$. If $\abs{d}\leq 3g+n-3$, we write
\[ [\tau_{d_1},\ldots,\tau_{d_n}]_{g,n} := \frac{\prod\limits_{i=1}^n (2d_i+1)! 2^{\abs{d}}}{\prod\limits_{i=0}^n d_i!} \int_{\overline{\mathcal{M}}_{g,n}}\psi_1^{d_1} \cdots \psi_n^{d_n}\omega^{d_0} \]
where $d_0=3g-3+n-\abs{d}$. See \cite{AC} for more details.

The volumes of moduli spaces of surfaces with boundary can now be expressed as follows:
\begin{thm}\label{thm_volformula} \cite{Mir2} Let $g,n\in\mathbb{N}$ and $x_1,\ldots,x_n\in \mathbb{R}_+$. Then
\[V_{g,n}(2x_1,\ldots,2x_n) = \sum_{d\in \mathbb{N}^n,\;\abs{d}\leq 3g+n-3}[\tau_{d_1},\ldots,\tau_{d_n}]_{g,n} \frac{x_1^{2d_1}}{(2d_1+1)!}\cdots \frac{x_n^{2d_n}}{(2d_n+1)!}.  \]
\end{thm}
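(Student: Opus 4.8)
The plan is to reduce the statement to a single cohomological identity on the Deligne--Mumford compactification $\overline{\Mod}_{g,n}$ and then to unwind the combinatorial constants. The starting point is the result of \cite{Mir1} quoted above: $V_{g,n}(L)$ is a polynomial in $L_1^2,\dots,L_n^2$ of degree $3g-3+n$. Hence it suffices to identify the coefficient of each monomial, and since both sides of the claimed formula are polynomials in the $x_i^2$ it is enough to match these coefficients. The geometric content is the assertion that, as the boundary lengths vary, the cohomology class of the Weil--Petersson form changes affine-linearly in the $L_i^2$:
\[
[\omega_{g,n}(L)] \;=\; [\omega_{g,n}(0)] \;+\; \tfrac12\sum_{i=1}^n L_i^2\,\psi_i \qquad\text{in } H^2(\overline{\Mod}_{g,n},\RR),
\]
where $\omega_{g,n}(0)$ is understood as (an extension over $\overline{\Mod}_{g,n}$ of) the Weil--Petersson K\"ahler form on $\Mod_{g,n}$, which represents a class supported by a smooth form by Wolpert's work.

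To prove this identity I would realize $\Mod_{g,n}(L)$ as a symplectic reduction and invoke the Duistermaat--Heckman theorem. Concretely, consider the moduli space of bordered hyperbolic surfaces in which each boundary geodesic carries a marked point; forgetting the marked points exhibits this as a principal $(S^1)^n$-bundle over $\Mod_{g,n}(L)$, and rotating the $i$-th marked point along its boundary generates a Hamiltonian $(S^1)^n$-action whose reduction of the ambient symplectic form at moment value $(\tfrac12 L_1^2,\dots,\tfrac12 L_n^2)$ recovers $(\Mod_{g,n}(L),\omega_{g,n}(L))$. As $L_i\to 0$ the $i$-th boundary becomes a cusp and this circle bundle degenerates to the unit circle bundle of the tautological line bundle $\mathcal{L}_i$, so its Euler class is $c_1(\mathcal{L}_i)=\psi_i$. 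The Duistermaat--Heckman theorem then says precisely that the class of the reduced symplectic form is affine-linear in the moment-map parameters with linear part $\sum_i\tfrac12 L_i^2\,c_1(\mathcal{L}_i)$, which is the displayed identity; one can cross-check the coefficient $\tfrac12$ against the known value of $V_{0,4}$.

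Granting the identity, the rest is bookkeeping. Writing $N=3g-3+n$ and $\omega=\omega_{g,n}(0)$, and using $V_{g,n}(L)=\tfrac{1}{N!}\int_{\overline{\Mod}_{g,n}}[\omega_{g,n}(L)]^{N}$, the multinomial expansion of $\bigl([\omega]+\tfrac12\sum_i L_i^2\psi_i\bigr)^{N}$ gives
\[
V_{g,n}(L) \;=\; \sum_{|d|\le N} \frac{1}{d_0!\,\prod_{i=1}^n d_i!}\Bigl(\int_{\overline{\Mod}_{g,n}} \psi_1^{d_1}\cdots\psi_n^{d_n}\,\omega^{d_0}\Bigr)\prod_{i=1}^n \frac{L_i^{2d_i}}{2^{d_i}},
\]
with $d_0=N-|d|$. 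Substituting $L_i=2x_i$ turns $\prod_i L_i^{2d_i}/2^{d_i}$ into $2^{|d|}\prod_i x_i^{2d_i}$, and comparing with the definition
\[
[\tau_{d_1},\dots,\tau_{d_n}]_{g,n} \;=\; \frac{\prod_{i=1}^n (2d_i+1)!\,2^{|d|}}{\prod_{i=0}^n d_i!}\int_{\overline{\Mod}_{g,n}}\psi_1^{d_1}\cdots\psi_n^{d_n}\,\omega^{d_0}
\]
shows the two expressions agree term by term, since $\prod_{i=0}^n d_i!=d_0!\prod_{i=1}^n d_i!$ and the factors $\prod_i(2d_i+1)!$ cancel against the $(2d_i+1)!$ in the denominators of the claimed formula.

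The main obstacle is the cohomological identity, and within it the behavior at the boundary of moduli space: one must know that the Weil--Petersson form on $\Mod_{g,n}$ and the classes $\psi_i$ extend to $\overline{\Mod}_{g,n}$ and that the Duistermaat--Heckman linearity persists on the compactification, so that $V_{g,n}(L)$ is genuinely the stated polynomial in intersection numbers on $\overline{\Mod}_{g,n}$ rather than merely in periods over the open part; making the symplectic reduction and the torus action precise near the cusps is the technical heart. The polynomiality theorem of \cite{Mir1} is exactly what licenses this passage to the compactification. As an alternative avoiding the symplectic machinery, one could prove the formula by induction: derive a topological recursion for $V_{g,n}(L)$ from the McShane--Mirzakhani identity together with Theorem~\ref{thm_integration}, and check that the normalized intersection numbers satisfy the matching (Virasoro) recursion — but the symplectic-reduction route is shorter and more transparent.
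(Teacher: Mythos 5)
This theorem is cited in the paper without proof (it is Mirzakhani's result from \cite{Mir2}), and your proposal reproduces her original argument: realize $\Mod_{g,n}(L)$ as a symplectic quotient, apply Duistermaat--Heckman to obtain $[\omega_{g,n}(L)]=[\omega_{g,n}(0)]+\tfrac12\sum_i L_i^2\psi_i$, and expand the top power. Your bookkeeping of the constants is correct, you correctly flag the degeneration at the boundary of moduli space as the technical heart, and your remark about the alternative route via topological recursion also matches the historical development.
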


\subsection{Bounds on volumes and intersection numbers}

In order to estimate moments in Section \ref{sec_lengthspectrum}, we will need bounds on both Weil-Petersson volumes and intersection numbers. Various estimates on both are known and we refer the reader to \cite{MZ} and the references therein for these. We will state only those bounds that we need.

The bound on intersection numbers we will need, which can be found on \cite[Page 286]{Mir3}, is the following:
\begin{lem}\label{lem_bound} \cite{Mir3} Given $n\in\mathbb{N}$, there exists a constant $c_0>0$ independent of $g$ and $d$ such that:
\[0\leq 1- \frac{[\tau_{d_1},\ldots,\tau_{d_n}]_{g,n}}{V_{g,n}} \leq c_0\frac{\abs{d}^2}{g}.\]
for all $g\in\mathbb{N}$ and $d\in\mathbb{N}^n$.
\end{lem}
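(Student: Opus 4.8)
The plan is to deduce this bound directly from known estimates on Weil–Petersson volumes and $\psi$-intersection numbers. Writing $d_0 = 3g-3+n-\abs{d}$, the defining formula gives
\[
\frac{[\tau_{d_1},\ldots,\tau_{d_n}]_{g,n}}{V_{g,n}}
= \frac{\prod_{i=1}^n(2d_i+1)!\,2^{\abs{d}}}{\prod_{i=1}^n d_i!}\cdot
\frac{1}{V_{g,n}}\int_{\overline{\Mod}_{g,n}}\psi_1^{d_1}\cdots\psi_n^{d_n}\omega^{d_0},
\]
so the quantity to control is a normalized one-point-style intersection number. First I would collect from \cite{MZ} (and \cite{Mir3}) the two ingredients needed: a two-sided comparison between $\int \psi_1^{d_1}\cdots\psi_n^{d_n}\omega^{d_0}$ and $\int\omega^{3g-3+n}/(3g-3+n)! = V_{g,n}$ up to the explicit combinatorial prefactor, together with the recursive/asymptotic control on how inserting each additional $\psi_i$ changes the volume. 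The upper bound $[\tau_{d_1},\ldots,\tau_{d_n}]_{g,n}\leq V_{g,n}$, i.e.\ the left inequality $0\leq 1-[\tau_{d}]_{g,n}/V_{g,n}$, should follow from the monotonicity statement that replacing a power of $\omega$ by a $\psi$-class can only decrease the (suitably normalized) integral; this is one of the standard facts proved via Mirzakhani's recursion for the coefficients of $V_{g,n}(L)$.

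For the quantitative right-hand inequality, I would argue one $\psi$-insertion at a time. The key estimate — which is exactly the kind of bound established in \cite{MZ} — is that for each $j$,
\[
1 - \frac{[\tau_{d_1},\ldots,\tau_{d_{j-1}},\tau_{d_j+1},\tau_{d_{j+1}},\ldots]_{g,n}}{[\tau_{d_1},\ldots,\tau_{d_j},\ldots]_{g,n}}
= \bigO{\frac{\abs{d}}{g}},
\]
uniformly in $g$ and in the multi-index, as long as $\abs{d}\leq 3g+n-3$. Telescoping from $(0,\ldots,0)$ up to $(d_1,\ldots,d_n)$ through $\abs{d}$ single steps, and using $[\tau_0,\ldots,\tau_0]_{g,n}=V_{g,n}$, each of the $\abs{d}$ factors contributes a multiplicative error of size $1-\bigO{\abs{d}/g}$, so the product differs from $1$ by $\bigO{\abs{d}^2/g}$. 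Keeping careful track shows the implied constant $c_0$ depends only on $n$ (the number of boundary components enters the recursion's combinatorics but not $g$ or $d$), which is what the statement asserts.

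The main obstacle is making the per-step estimate genuinely \emph{uniform}: the ratio of consecutive intersection numbers must be $1-\bigO{\abs{d}/g}$ with a constant independent of where in the multi-index the step is taken and independent of the current size of $\abs{d}$, and one must verify this stays valid all the way up to the boundary of the admissible range $\abs{d}\leq 3g+n-3$, where the combinatorial prefactors $\tfrac{(2d_i+1)!2^{d_i}}{d_i!}$ are largest. This is precisely the delicate part of the volume asymptotics in \cite{MZ}; once it is in hand, the telescoping and the bookkeeping of the constant are routine. I would therefore structure the proof as: (i) quote the normalization $[\tau_0,\ldots,\tau_0]_{g,n}=V_{g,n}$ and the monotonicity giving the lower bound; (ii) quote the uniform one-step ratio estimate from \cite{MZ}; (iii) telescope over $\abs{d}$ steps and expand $\prod(1-\bigO{\abs{d}/g})$ to obtain $1-[\tau_d]_{g,n}/V_{g,n}=\bigO{\abs{d}^2/g}$ with $c_0=c_0(n)$.
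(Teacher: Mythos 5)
The paper does not prove this lemma; it is quoted directly from \cite{Mir3} (where it appears as Lemma 3.5), so there is no in-paper argument to compare your proposal against. That said, your sketch does track the strategy of the cited source: the lower bound $[\tau_{d_1},\ldots,\tau_{d_n}]_{g,n}\le V_{g,n}$ comes from a monotonicity statement proved via the Mirzakhani--Zograf recursions (incrementing any $d_j$ by one can only decrease the normalized intersection number), and the quantitative upper bound is obtained by telescoping from $(0,\ldots,0)$ to $(d_1,\ldots,d_n)$ and controlling each single-index increment. You correctly flag the crux --- a per-step ratio estimate uniform in $g$ and in the position along the telescope --- and correctly observe it must come from the recursion identities of \cite{MZ}/\cite{Mir3} rather than from soft asymptotics. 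Two small refinements worth noting: in the reference the per-step loss is more naturally of size $O\big((d_j+1)/g\big)$ for the index $d_j$ being incremented, which after summing $\sum_j\sum_{m<d_j}(m+1)/g$ still gives the stated $O(\abs{d}^2/g)$, so your coarser $O(\abs{d}/g)$-per-step bookkeeping yields the same conclusion with a possibly worse constant; and since the right-hand side $c_0\abs{d}^2/g$ exceeds $1$ once $\abs{d}\gtrsim\sqrt{g}$, the lemma is vacuous outside the range where the telescoping factors stay close to $1$, so the worry about the prefactors near the top of the admissible range $\abs{d}\le 3g+n-3$ does not in fact need to be resolved.
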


The following is part of Theorem 1.4 in \cite{MZ}:
\begin{thm}\label{thm_volestimate2}\cite{MZ} for any fixed $n\geq 0$: 
\[\frac{V_{g-1,n+2}}{V_{g,n}} = 1 +\frac{3-2n}{\pi^2}\cdot \frac{1}{g} + O\left(\frac{1}{g^2}\right) \]
as $g\to\infty$.
\end{thm}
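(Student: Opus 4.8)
The plan is to derive the estimate from Mirzakhani's topological recursion for the volume polynomials $V_{g,n}(L_1,\ldots,L_n)$ (from \cite{Mir1}), together with the two special-value identities $V_{g,n+1}(L,2\pi i)=0$ and $\frac{\partial V_{g,n+1}}{\partial L_{n+1}}(L,2\pi i)=2\pi i\,(2g-2+n)\,V_{g,n}(L)$. Concretely, I would write Mirzakhani's recursion for $\frac{\partial}{\partial L_1}\big(L_1 V_{g,n+1}(L_1,\ldots,L_{n+1})\big)$ and specialize every $L_i$ to $0$. This yields an identity expressing $V_{g,n+1}$ as a sum of three groups of terms: a \emph{non-separating} contribution built from an integral $\int_0^\infty\int_0^\infty x\,y\,K(x+y)\,V_{g-1,n+2}(x,y,0,\ldots,0)\,dx\,dy$ for an explicit kernel $K$; a \emph{separating} contribution $\sum_{g_1+g_2=g}\sum V_{g_1,\ast}\,V_{g_2,\ast}$; and a \emph{boundary} contribution consisting of $n$ integrals of the form $\int_0^\infty(\ldots)\,V_{g,n}(x,0,\ldots,0)\,dx$. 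Thus $V_{g,n+1}$, $V_{g-1,n+2}$ and $V_{g,n}$ are linked directly, and all the finer information will sit in the subleading terms of the non-separating and boundary pieces.

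First I would establish the crude asymptotics $V_{g,n+1}/V_{g,n}\sim 8\pi^2 g$ and $V_{g,n}/V_{g-1,n+2}\to 1$, by feeding the recursion back into itself and using a priori comparison bounds between volumes (super-multiplicativity $V_{g_1,n_1}V_{g_2,n_2}\ll V_{g_1+g_2,n_1+n_2-2}$ with room to spare, and $V_{g,n}\asymp (2g-3+n)!\,(4\pi^2)^{2g-3+n}/\sqrt{g}$), isolating the dominant ``small $x,y$'' part of the non-separating integral, whose value is governed by the constant term $V_{g-1,n+2}$ of the polynomial. With these in hand I would bootstrap: expand $K$ and the polynomial $V_{g-1,n+2}(x,y,0,\ldots,0)$ term by term, using Theorem~\ref{thm_volformula} to write its coefficients via intersection numbers and Lemma~\ref{lem_bound} to replace each $[\tau_{d_1},\ldots,\tau_{d_n}]_{g,n}$ by $V_{g,n}$ up to a relative error $O(|d|^2/g)$. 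Combined with the expansion of the ``add-a-boundary'' ratio $V_{g,n+1}/V_{g,n}$ coming from the dilaton identity, this produces a relation of the shape $V_{g,n}/V_{g-1,n+2} = 1 + c_n/g + O(1/g^2)$, and matching the constants pins down $c_n=-(3-2n)/\pi^2$, i.e. $V_{g-1,n+2}/V_{g,n}=1+\frac{3-2n}{\pi^2}\cdot\frac1g+O(1/g^2)$.

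The hard part will be the uniform control of the error terms at order $1/g^2$. Two points require care. First, one must show that the separating sum $\sum_{g_1+g_2=g}\sum V_{g_1,\ast}V_{g_2,\ast}$ and the lower-order pieces of the boundary term are $O(1/g^2)$ relative to $V_{g,n}$; this rests on strict super-multiplicativity of the volumes (each summand loses a full factorial-and-power factor unless one genus is $0$ or $1$, and those extremal cases must be treated separately). Second, one must bound the large-$(x,y)$ tail of the non-separating integral, where $V_{g-1,n+2}(x,y,0,\ldots,0)$ is evaluated far from the origin; this is exactly where Lemma~\ref{lem_bound} is essential, since it controls the top coefficients of the volume polynomial relative to its constant term and forces the sum over $|d|$ to converge with only an $O(1/g)$ remainder. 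Finally, one has to check that the $O(1/g)$ error in the auxiliary ratio $V_{g,n+1}/V_{g,n}$ does not corrupt the $1/g$ coefficient of the answer. Once all of these estimates are assembled, the algebra that extracts the constant $(3-2n)/\pi^2$ is routine.
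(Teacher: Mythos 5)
This theorem is quoted from Mirzakhani--Zograf \cite{MZ}; the paper gives no proof of its own, so the only meaningful comparison is with the argument in \cite{MZ} itself. Your outline---Mirzakhani's topological recursion specialized at $L=0$, the Do--Norbury string/dilaton identities, the split of the recursion into non-separating, separating, and boundary contributions, and a bootstrap from crude to refined ratio asymptotics---does match the shape of the \cite{MZ} argument, although \cite{MZ} works at the level of the normalized intersection numbers $[\tau_{d_1},\ldots,\tau_{d_n}]_{g,n}$ rather than the volume polynomials themselves.

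That said, there are two problems with the proposal as written. First, a small one: the identity $V_{g,n+1}(L,2\pi i)=0$ is false for generic $L$; the Do--Norbury string-type identity expresses $V_{g,n+1}(L,2\pi i)$ as a sum of integrals over the remaining boundary lengths, and that expression vanishes only when all $L_j=0$. Since you specialize to $L=0$ anyway this is harmless, but the identity should not be stated as if it held identically. Second, and more seriously, you invoke $V_{g,n}\asymp (2g-3+n)!\,(4\pi^2)^{2g-3+n}/\sqrt{g}$ as an a priori input, but this is precisely Theorem \ref{thm_volexpansion}, which in \cite{MZ} is proved \emph{simultaneously} with the present statement as output of the same bootstrap; quoting it as an external black box is circular. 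What \cite{MZ} actually does is first establish self-contained one-sided bounds on ratios of the form $V_{g,n+1}/(g\,V_{g,n})$ and $V_{g-1,n+2}/V_{g,n}$, and only then iterate. Finally, the claim that ``matching the constants pins down $c_n=-(3-2n)/\pi^2$ \ldots\ the algebra \ldots\ is routine'' is exactly where the content of the theorem sits: extracting the $1/g$ coefficient requires tracking every $O(1/g)$ contribution from the kernel integral, from the $|d|\ge 1$ corrections controlled by Lemma \ref{lem_bound} (whose constant matters at this order), and from the boundary terms, while showing that the separating sum really is $O(1/g^2)$. Your sketch names the right ingredients but carries out none of these estimates, so it should be read as an outline of the \cite{MZ} proof rather than a proof.
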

 
The asymptotic behavior of $V_{g,n}$ as $g\to\infty$ is known up to a multiplicative constant:
\begin{thm}\label{thm_volexpansion}\cite{MZ} There exists a universal constant $\alpha\in (0,\infty)$ such that for any given $k\geq 1$, $n\geq 0$,
\[V_{g,n} = \alpha\cdot \frac{(2g-3+n)! (4\pi^2)^{2g-3+n}}{\sqrt{g}}\left(1+\frac{c_n^{(1)}}{g}+\ldots +\frac{c_n^{(k)}}{g^k} + O\left(\frac{1}{g^{k+1}} \right) \right) \]
as $g\to\infty$. Each term $c_n^{(i)}$ in the asymptotic expansion is a polynomial in $n$ of degree $2i$ with coefficients in $\mathbb{Q}[\pi^{-2},\pi^2]$ that are effectively computable.
\end{thm}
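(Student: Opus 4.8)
The plan is to run an inductive bootstrap on the recursions satisfied by the Weil--Petersson volumes, working throughout with the normalized quantities
\[\widehat V_{g,n}:=\frac{V_{g,n}}{(2g-3+n)!\,(4\pi^2)^{2g-3+n}},\]
so that the assertion becomes $\widehat V_{g,n}\sqrt g=\alpha\bigl(1+c_n^{(1)}/g+\cdots+c_n^{(k)}/g^k+O(g^{-k-1})\bigr)$. \emph{Step 1 (assemble the recursions).} Mirzakhani's topological recursion for the volume polynomials $V_{g,n}(L)$ \cite{Mir1}, together with the identities it produces for the coefficients $[\tau_{d_1},\ldots,\tau_{d_n}]_{g,n}$ of Theorem~\ref{thm_volformula} (these are conveniently packaged via the string and dilaton equations, i.e.\ evaluation and differentiation at $L_{n+1}=2\pi i$), yields a closed family of relations. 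Each expresses an order-$(g,n)$ coefficient as a sum of: (i) a \emph{cusp-removal} term built from order-$(g,n-1)$ coefficients with explicit weights rational in $\pi^2$ (coming from the constants $\zeta(2i)(1-2^{1-2i})$); (ii) a \emph{nonseparating} term built from order-$(g-1,n+2)$ coefficients; and (iii) a \emph{separating} term, a finite sum over stable splittings $g=g_1+g_2$, $\{1,\dots,n+2\}=I_1\sqcup I_2$ of products $V_{g_1,|I_1|}V_{g_2,|I_2|}$. Rewriting this in the $\widehat V$'s, the key structural point is that after normalization term (iii) is negligible: the factorials $(2g_i-3+|I_i|)!$ in the denominators force $\sum_{\mathrm{stable}}\widehat V_{g_1,|I_1|}\widehat V_{g_2,|I_2|}=O\!\bigl(\widehat V_{g,n}/g\bigr)$ once two-sided bounds on $\widehat V$ are available. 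This decoupling is what makes the asymptotics tractable.

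\emph{Step 2 (a priori two-sided bounds).} Using these recursions and a double induction on $g$ and $n$, I would establish $c\,g^{-1/2}\le\widehat V_{g,n}\sqrt g\le C$ uniformly for $n$ up to a fixed multiple of $g$ — the range one actually meets when iterating the recursion down to base cases $V_{0,3}$, $V_{1,1}$. In practice it is cleanest first to bound the ratios $\widehat V_{g,n+1}/\widehat V_{g,n}$ and $\widehat V_{g-1,n+2}/\widehat V_{g,n}$ from above and below (Lemma~\ref{lem_bound} is exactly the input controlling the error between the raw coefficients and $V_{g,n}$), and then to pin the power $g^{-1/2}$ using the precise coefficient in the cusp-removal relation — the same mechanism already visible in Theorem~\ref{thm_volestimate2}. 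These bounds are fed back into Step 1 to make the estimate on term (iii) rigorous.

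\emph{Step 3 (existence of $\alpha$; full expansion).} With the ratios controlled one gets $\widehat V_{g,n+1}/\widehat V_{g,n}\to1$ and $\widehat V_{g-1,n+2}/\widehat V_{g,n}\to1$, and then that $g\mapsto\widehat V_{g,n}\sqrt g$ is Cauchy (consecutive differences are $O(g^{-2})$, hence summable), defining the universal constant $\alpha$; its independence of $n$ follows from the ratio limits. For the full expansion, induct on $k$: assuming $\widehat V_{g,n}\sqrt g=\alpha(1+\sum_{j<k}c_n^{(j)}g^{-j}+O(g^{-k}))$ with each $c_n^{(j)}$ a polynomial in $n$ of degree $2j$ over $\QQ[\pi^2,\pi^{-2}]$, substitute this ansatz into the normalized recursions and equate coefficients of $g^{-k}$. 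The combinatorial weights that appear ($2g-3+n$, $2g-2+n$, and the $\pi^2$-rational cusp weights) are polynomial in $n$ of controlled degree, so the $g^{-k}$-coefficient is a linear equation that determines $c_n^{(k)}$ as a degree-$2k$ polynomial over $\QQ[\pi^2,\pi^{-2}]$; the remainder stays $O(g^{-k-1})$ upon applying the inductive hypothesis at $(g,n-1)$ and $(g-1,n+2)$, and effectivity is automatic since every step is an explicit finite computation.

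I expect the main obstacle to be Step 2: getting clean two-sided bounds on the normalized volumes that are uniform in $n$ growing with $g$, and — inseparably — the uniform estimate on the sum over stable splittings in (iii), where one must show that both the balanced splittings ($g_i\approx g/2$) and the extreme ones ($g_i$ small) contribute negligibly, which needs the sharp shape of $\widehat V_{g,n}$ and not merely its order of growth. There is thus a genuine circularity to break, which forces the whole argument to be run as a simultaneous induction over $g$ and $n$; once those bounds are in place, Step 3 is a lengthy but routine bootstrap.
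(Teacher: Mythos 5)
First, a point of orientation: the paper does not prove this statement at all --- it is quoted as background from Mirzakhani--Zograf \cite{MZ}, so there is no internal proof to compare against, and your sketch has to be measured against the argument of \cite{MZ} itself. At the level of architecture your plan does follow that argument: normalize by $(2g-3+n)!\,(4\pi^2)^{2g-3+n}$, feed Mirzakhani's recursion (together with the string and dilaton relations and the $\zeta(2i)(1-2^{1-2i})$ weights) into a bootstrap, control the separating splittings, pass to ratio asymptotics, and extract $g^{-1/2}$ and the constant $\alpha$ from a telescoping product. As a map of the territory it is accurate.

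As a proof, however, it has genuine gaps exactly where the substance of \cite{MZ} lies. (1) The separating term is not merely an error to be discarded: the extreme splittings (say $g_1=O(1)$) make it of relative size $\Theta(1/g)$, so it contributes to every coefficient $c_n^{(i)}$; to run your induction on $k$ you must expand these sums themselves (small piece exact, large piece via the inductive hypothesis, middle splittings exponentially small by the factorial ratio), which Step 3 omits --- ``negligible at order $1/g$'' is enough for the leading term but not for the expansion. (2) The $g^{-1/2}$ and the existence of $\alpha$ hinge on the second-order statement $\widehat V_{g+1,n}/\widehat V_{g,n}=1-\tfrac{1}{2g}+O(g^{-2})$ with the exact coefficient $-\tfrac12$; appealing to ``the precise coefficient in the cusp-removal relation'' and to Theorem~\ref{thm_volestimate2} does not produce it, and is partly circular, since Theorem~\ref{thm_volestimate2} is itself part of the same theorem of \cite{MZ} proved by the same machinery. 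Obtaining that coefficient requires expanding auxiliary ratios such as $[\tau_1,0,\ldots,0]_{g,n}/V_{g,n}$ to order $g^{-2}$, a substantial computation rather than a remark, and without it your ``consecutive differences are $O(g^{-2})$'' claim is unsupported. (3) The a priori two-sided bounds uniform for $n$ growing linearly with $g$ are asserted, and your architecture (unwinding the recursion all the way to $V_{0,3}$, $V_{1,1}$) forces exactly this uniformity, which is delicate because the coefficients $c_n^{(i)}$ blow up polynomially in $n$; \cite{MZ} sidestep this by comparing consecutive $(g,n)$ with $n$ fixed. In short, the three steps you flag as ``to be established'' are the theorem; as written they are named rather than proven.
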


Finally, we have \cite[Equation 3.7]{Mir3}:
\begin{lem}\label{lem_volestimate3} Let $g,n\in\NN$ and $x_1,\ldots,x_n \in \RR^+$. Then
\[ V_{g,n}(2x_1,\ldots,2x_n)\leq e^x\cdot V_{g,n}, \]
where $x=\sum\limits_{i=1}^nx_i$.
\end{lem}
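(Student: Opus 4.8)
The plan is to read the inequality off directly from the polynomial expansion of $V_{g,n}(2x_1,\ldots,2x_n)$ provided by Theorem \ref{thm_volformula}, estimating the sum term by term. By the left-hand inequality in Lemma \ref{lem_bound}, each coefficient satisfies $0\leq[\tau_{d_1},\ldots,\tau_{d_n}]_{g,n}\leq V_{g,n}$; in particular all coefficients are nonnegative, and since $x_1,\ldots,x_n>0$, every monomial appearing in the formula is a nonnegative real number bounded above by $V_{g,n}\prod_{i=1}^n x_i^{2d_i}/(2d_i+1)!$.

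Summing these bounds over all $d\in\NN^n$ with $\abs{d}\leq 3g+n-3$, and then enlarging the index set to all of $\NN^n$ (which only increases the right-hand side, since every new term is nonnegative), I obtain
\[ V_{g,n}(2x_1,\ldots,2x_n)\;\leq\; V_{g,n}\sum_{d\in\NN^n}\prod_{i=1}^n\frac{x_i^{2d_i}}{(2d_i+1)!}\;=\;V_{g,n}\prod_{i=1}^n\left(\sum_{d_i=0}^\infty\frac{x_i^{2d_i}}{(2d_i+1)!}\right), \]
where the factorization is legitimate because the summand is a product over the index $i$ and all terms are positive. To finish I estimate each factor crudely: from $(2d_i+1)!\geq(2d_i)!$ one gets $\sum_{d_i\geq 0}x_i^{2d_i}/(2d_i+1)!\leq\sum_{d_i\geq 0}x_i^{2d_i}/(2d_i)!=\cosh(x_i)\leq e^{x_i}$ for $x_i\geq 0$, so taking the product over $i$ yields $\prod_{i=1}^n e^{x_i}=e^{x}$ with $x=\sum_{i=1}^n x_i$, as claimed.

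I do not expect any genuine obstacle here; the only points warranting care are the nonnegativity of the intersection numbers (so that passing to a partial sum and estimating termwise is valid), which is part of Lemma \ref{lem_bound}, and the convergence of the auxiliary series $\sum_{d_i\geq 0}x_i^{2d_i}/(2d_i+1)!$, which is immediate since it is dominated by $\sum_{m\geq 0}x_i^m/m!=e^{x_i}$. One could instead recognize the auxiliary factor as $\sinh(x_i)/x_i$ and invoke $\sinh(x_i)\leq x_i e^{x_i}$, but routing through $\cosh$ avoids dividing by $x_i$ and hence any discussion of the case $x_i=0$.
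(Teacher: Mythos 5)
Your proof is correct, and since the paper simply cites this bound from \cite{Mir3} without reproducing an argument, this is precisely the natural route: expand via Theorem~\ref{thm_volformula}, bound the coefficients by $V_{g,n}$ using Lemma~\ref{lem_bound}, and sum the resulting series. One small attribution slip is worth flagging: the left-hand inequality $0\le 1-[\tau_{d_1},\ldots,\tau_{d_n}]_{g,n}/V_{g,n}$ in Lemma~\ref{lem_bound} yields only the \emph{upper} bound $[\tau_{d_1},\ldots,\tau_{d_n}]_{g,n}\le V_{g,n}$; it does not assert that the intersection numbers are nonnegative. That nonnegativity is true (the $\psi_i$ and $\omega$ are nef), but it is not what the cited lemma says, and in fact your argument never needs it: the termwise bound by $V_{g,n}\prod_i x_i^{2d_i}/(2d_i+1)!$ and the legitimacy of enlarging the index set to all of $\NN^n$ use only that $V_{g,n}\ge 0$ and that the monomials in $x_i$ are nonnegative. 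Routing the final estimate through $\cosh(x_i)\le e^{x_i}$ rather than $\sinh(x_i)/x_i$ is a clean touch.
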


\subsection{Random surfaces}\label{sec_backgroundrandom}

Because $V_g$ is finite for every $g$, we can turn $\Mod_g$ into a probability space. Indeed, given a measurable subset $A\subset\Mod_g$, define
\[\PP_g[A] = \frac{1}{V_g} \int_A \frac{\wedge^{3g-3}\omega}{(3g-3)!}  \]
and given a random variable (i.e. a measurable function) $F:\Mod_g\to\RR$ and $B\subset\mathbb{R}$, we define
\[\EE_g[F] = \frac{1}{V_g}\int_{\Mod_g}F(X)\; dX \;\;\text{and}\;\; \PP_g[F\in B] =  \EE_g[\chi_{\{F\in B\}}]\] 
where the integral on the left hand side is shorthand for integration with respect to $\frac{\wedge^{3g-3+n}\omega}{(3g-3+n)!}$, and might not be finite, and $\chi_{\{F\in B\}}:\mathcal{M}_g\to\mathbb{R}$ is defined by
\[ \chi_{\{F\in B\}}(X) = \left\{\begin{array}{ll} 1 & \text{if } F(X)\in B \\ 0 & \text{otherwise} \end{array} \right. \]

In order to compute the expected systole later on, we need the following bound, due to Mirzakhani \cite{Mir4}:
\begin{thm}\label{thm_sysbd}\cite{Mir4}  There exist universal constants $A, B>0$ so that for any sequence $\{c_g\}_g$ of postive numbers with $c_g<A\; \log(g)$ we have:
\[\PP_g[\text{The systole of }S\text{ has length }>c_g] <   B\;c_g \; e^{-c_g}. \]
\end{thm}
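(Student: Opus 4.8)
The plan is to reduce the statement to showing that a suitable counting random variable is nonzero with high probability, and then to apply the second moment method. Since the shortest closed geodesic on a hyperbolic surface is always simple, the event $\{\sys(X)>c_g\}$ forces $X$ to carry no closed geodesic of length $\leq c_g$, in particular no non-separating simple one. Writing $N^{\mathrm{ns}}_g:\Mod_g\to\NN$ for the number of non-separating simple closed geodesics of length $\leq c_g$, we thus have, by Chebyshev's inequality (equivalently, Paley--Zygmund),
\[\PP_g[\sys>c_g]\;\leq\;\PP_g[N^{\mathrm{ns}}_g=0]\;\leq\;\frac{\Var{g}{N^{\mathrm{ns}}_g}}{\bigl(\EE_g[N^{\mathrm{ns}}_g]\bigr)^2}.\]
It then suffices to bound $\EE_g[N^{\mathrm{ns}}_g]$ from below and $\EE_g[N^{\mathrm{ns}}_g(N^{\mathrm{ns}}_g-1)]$ from above, which I would do with Mirzakhani's integration formula. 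Applied to a single non-separating simple closed curve, Theorem \ref{thm_integration} gives $\EE_g[N^{\mathrm{ns}}_g]=V_g^{-1}\int_0^{c_g}V_{g-1,2}(x,x)\,x\,dx$; applied to pairs it writes $\EE_g[N^{\mathrm{ns}}_g(N^{\mathrm{ns}}_g-1)]$ as a sum over mapping class group orbits of ordered pairs $(\alpha,\beta)$ of distinct non-separating simple closed curves, the intersecting pairs being treated separately below.

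The key input is the uniform volume estimate
\[V_{g,n}(x_1,\ldots,x_n)\;=\;\Bigl(1+\bigO{\tfrac{(\log g)^2}{g}}\Bigr)\,V_{g,n}\prod_{i=1}^n\frac{2\sinh(x_i/2)}{x_i},\]
valid uniformly for $x_i\leq A\log g$ once $A$ is small and $g$ large: the upper bound is immediate from Theorem \ref{thm_volformula} and the bound $[\tau_{d_1},\ldots,\tau_{d_n}]_{g,n}\leq V_{g,n}$ of Lemma \ref{lem_bound}, and the matching lower bound follows by truncating the polynomial of Theorem \ref{thm_volformula} at a sufficiently large degree $\bigO{\log g}$ — the omitted tail of $\prod\sinh(x_i/2)/(x_i/2)$ being negligible on that length range — and using that the retained coefficients equal $(1-\bigO{(\log g)^2/g})V_{g,n}$, again by Lemma \ref{lem_bound}. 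Feeding this in, together with $V_{g-1,2}/V_g=1+\bigO{1/g}$ and $V_{g-2,4}/V_g=1+\bigO{1/g}$ from Theorem \ref{thm_volestimate2} and the identity $\int_0^{c_g}\frac{4\sinh^2(x/2)}{x}dx=\lambda_{[0,c_g]}$, one gets $\EE_g[N^{\mathrm{ns}}_g]=(1+o(1))\lambda_{[0,c_g]}$ with error $\bigO{(\log g)^2/g}$. For the second factorial moment, the dominant orbit is that of a disjoint pair with \emph{connected} complement $\Sigma_g\setminus(\alpha\cup\beta)=\Sigma_{g-2,4}$, contributing $V_g^{-1}\int_{[0,c_g]^2}V_{g-2,4}(x_1,x_1,x_2,x_2)\,x_1x_2\,dx=(1+o(1))\lambda_{[0,c_g]}^2=(1+o(1))\bigl(\EE_g[N^{\mathrm{ns}}_g]\bigr)^2$. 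All other orbits should be of strictly lower order: for disjoint pairs whose union separates $\Sigma_g$ the relevant product of volumes divided by $V_g$ is $\bigO{1/g^2}$ by the asymptotic expansion of Theorem \ref{thm_volexpansion}, orbits in which a curve cuts off a handle are similar, and for intersecting pairs the collar lemma forces the two length-$\leq c_g$ geodesics to fill a subsurface of complexity $o(g)$ once $A$ is small, so the complementary volume is again suppressed.

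Granting these estimates, $\Var{g}{N^{\mathrm{ns}}_g}\leq\EE_g[N^{\mathrm{ns}}_g]+\bigO{(\log g)^2/g}\bigl(\EE_g[N^{\mathrm{ns}}_g]\bigr)^2$, so Chebyshev gives $\PP_g[\sys>c_g]\lesssim\lambda_{[0,c_g]}^{-1}+(\log g)^2/g$. Since $\lambda_{[0,c]}\geq\int_0^c t\,dt=c^2/2\geq c-\tfrac12$ for all $c\geq0$ and $\lambda_{[0,c]}\sim e^c/c$ as $c\to\infty$, and $c_g<A\log g$ with $A<1$, this is already $\bigO{(1+c_g)e^{-c_g}}$. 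To shave the spurious factor $1+c_g$ and reach the stated bound $Be^{-c_g}$ one needs a sharper estimate of $\PP_g[N^{\mathrm{ns}}_g=0]$: using that $V_g(\Gamma,x)$ factors over the components of $\Sigma_g\setminus N(\Gamma)$ with ratios tending to $1$ (so that the length functions of disjoint curve classes are asymptotically independent), a Janson/Suen-type inequality should give $\PP_g[N^{\mathrm{ns}}_g=0]\leq Ce^{-\EE_g[N^{\mathrm{ns}}_g]}\leq Ce^{-(1-o(1))\lambda_{[0,c_g]}}\leq Be^{-c_g}$, the last step once more using $\lambda_{[0,c_g]}\geq c_g-\tfrac12$. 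I expect the main obstacle to be uniformity in $c_g$: every error term above — the coefficient corrections from Lemma \ref{lem_bound}, the volume-ratio corrections, and above all the contribution of the intersecting and genus-splitting pair orbits — must be controlled simultaneously as $c_g$ grows, and it is precisely to keep the number of short geodesics (of order $e^{c_g}/c_g$) well below $g$ that one is forced to take $c_g<A\log g$ with $A$ sufficiently small.
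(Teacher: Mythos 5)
Your proposal is correct in substance and follows essentially the same route as the paper's appendix: bound $\PP_g[\sys>c_g]$ by the second moment method applied to the count of short simple closed geodesics, and estimate the first and second factorial moments via Mirzakhani's integration formula and the volume asymptotics (the paper simply quotes Propositions \ref{prp_poissonbound1} and \ref{prp_poissonbound2} for these, rather than redoing the volume estimates). The only divergence is at the very end: where you propose a Janson/Suen-type refinement to remove the factor $1+c_g$, the paper is content with the plain second-moment bound $C_1/(\lambda_{[0,c_g]}+C_2)$ together with the exponential lower bound $\lambda_{[0,x]}\geq e^{C_3x}$, absorbing the loss into the constants $A$ and $B$.
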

Because it is not available elsewhere in the literature, we sketch a proof of this result in Appendix \ref{app_workmaryam}.

\subsection{The method of moments} 
The main probabilistic tool we use is the method of moments. More precisely, we will study the asymptotics of the joint factorial moments of finite collections of sequences of random variables.

Let us first set up some notation. Given a probability space $(\Omega,\Sigma,\mathbb{P})$, a random variable $X:\Omega\to\mathbb{N}$ and $n\in\mathbb{N}$, we define the random variable
\[(X)_n := X(X-1)\cdots (X-n+1) \]
If its expectation $\EE[(X)_n]$ exists, it is called the \emph{$n^{th}$ factorial moment} of $X$.

Furthermore, recall that an $\mathbb{N}$-valued random variable $X$ is said to be Poisson distributed with mean $\lambda\in [0,\infty)$ if 
\[\PP[X=k] = \frac{\lambda^k e^{-\lambda}}{k!}\;\;\text{for all}\;\;k\in\mathbb{N} \]
The $n^{th}$ factorial moment of such a variable is equal to $\lambda^n$ for all $n\in\mathbb{N}$. It turns out that this determines the law of the variable: an $\mathbb{N}$-valued random variable is Poisson distributed with mean $\lambda\in (0,\infty)$ if and only if its $n^{th}$ factorial moment is equal to $\lambda^n$ for all $n\in\NN$.

We are now able to state the theorem we need, which can for instance be found in \cite[Theorem 21]{Bol}.

\begin{thm}\label{thm_poisson} \emph{(The method of moments)} Let $\{(\Omega_i,\Sigma_i,\mathbb{P}_i)\}_{i\in\mathbb{N}}$ be a sequence of probability spaces. Furthermore, let $k\in\mathbb{N}$, let $X_{1,i},\ldots,X_{k,i}:\Omega_i\to\mathbb{N}$ be random variables and suppose there exist $\lambda_1,\ldots,\lambda_k \in (0,\infty)$ such that
\[\lim_{i\to\infty}\EE[(X_{1,i})_{n_1}(X_{2,i})_{n_2}\cdots (X_{k,i})_{n_k}] = \lambda_1^{n_1}\lambda_2^{n_2}\cdots\lambda_k^{n_k}\] 
for all $n_1,\ldots,n_k\in\mathbb{N}$ then 
\[\lim_{i\to\infty}\PP[X_{1,i}=m_1,\ldots,X_{k,i}=m_k] = \prod_{j=1}^k \frac{\lambda_j^{m_j}e^{-\lambda_j}}{m_j!}, \]
for all $m_1,\ldots,m_k\in\mathbb{N}$. In other words, the vector $(X_{1,i},\ldots,X_{k,i}):\Omega\to\NN^k$ converges jointly in distribution to a vector of independent Poisson variables with means given by $\lambda_1,\ldots,\lambda_k$.
\end{thm}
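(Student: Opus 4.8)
The plan is to deduce Theorem~\ref{thm_poisson} from the convergence of probability generating functions. Since every $X_{j,i}$ is $\NN$-valued, joint convergence in distribution of the vector $(X_{1,i},\dots,X_{k,i})$ is equivalent, by the (multivariate) continuity theorem for generating functions of $\NN$-valued random variables, to pointwise convergence on $[0,1]^k$ of
\[
G_i(z_1,\dots,z_k)\;=\;\EE\!\left[z_1^{X_{1,i}}\cdots z_k^{X_{k,i}}\right]
\]
to the generating function of the candidate limit. A vector of independent Poisson variables with means $\lambda_1,\dots,\lambda_k$ has generating function $\prod_{j=1}^{k}e^{\lambda_j(z_j-1)}$, so it suffices to prove $G_i(z_1,\dots,z_k)\to\prod_{j=1}^{k}e^{\lambda_j(z_j-1)}$ for each $(z_1,\dots,z_k)\in[0,1]^k$. (One could equally bypass generating functions and establish $\PP[X_{1,i}=m_1,\dots,X_{k,i}=m_k]\to\prod_j\lambda_j^{m_j}e^{-\lambda_j}/m_j!$ directly by Bonferroni-type inequalities; this is essentially the same computation organised differently.)

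The analytic input is a Taylor-with-remainder identity relating generating functions to factorial moments. Fix an $\NN$-valued random variable $X$ with finite factorial moments and $z\in[0,1]$, put $u=1-z\in[0,1]$, and expand the polynomial $t\mapsto(1-t)^X$ about $t=0$; using the integral form of the remainder together with $0\le(1-t)^{X-N-1}\le1$ for $t\in[0,u]$ when $N+1\le X$ (the remainder being identically zero once $N\ge X$) one obtains, for every $N\in\NN$, the almost sure bound
\[
\left|\,z^X-\sum_{n=0}^{N}\frac{(X)_n}{n!}(z-1)^n\,\right|\;\le\;\frac{(X)_{N+1}}{(N+1)!}\,u^{N+1}\;\le\;\frac{(X)_{N+1}}{(N+1)!},
\]
so that $\bigl|\EE[z^X]-\sum_{n=0}^{N}\tfrac{(z-1)^n}{n!}\EE[(X)_n]\bigr|\le\EE[(X)_{N+1}]/(N+1)!$. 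Applying this in each coordinate to $z_1^{X_{1,i}},\dots,z_k^{X_{k,i}}$, multiplying out, and controlling the cross terms by elementary estimates (using $0\le z_j^{X_{j,i}}\le1$) gives
\[
G_i(z_1,\dots,z_k)\;=\;\sum_{n\in\{0,\dots,N\}^k}\left(\prod_{j=1}^{k}\frac{(z_j-1)^{n_j}}{n_j!}\right)\EE\!\left[(X_{1,i})_{n_1}\cdots(X_{k,i})_{n_k}\right]\;+\;R_{i,N},
\]
where $|R_{i,N}|\le\EE\!\bigl[\prod_{j=1}^{k}\bigl(1+\tfrac{(X_{j,i})_{N+1}}{(N+1)!}\bigr)\bigr]-1$, which is a finite sum, with $i$-independent coefficients, of joint factorial moments of $(X_{1,i},\dots,X_{k,i})$.

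The conclusion follows by letting $i\to\infty$ for fixed $N$ and then $N\to\infty$. For fixed $N$ the displayed main sum has finitely many terms, each converging by hypothesis to $\bigl(\prod_j(z_j-1)^{n_j}/n_j!\bigr)\prod_j\lambda_j^{n_j}$, while the finitely many joint factorial moments bounding $R_{i,N}$ converge to the corresponding products of powers of the $\lambda_j$; hence $\limsup_i|R_{i,N}|\le\prod_{j=1}^{k}\bigl(1+\lambda_j^{N+1}/(N+1)!\bigr)-1$, which, together with the fact that $\sum_{n\in\{0,\dots,N\}^k}\prod_j\lambda_j^{n_j}(z_j-1)^{n_j}/n_j!\to\prod_j e^{\lambda_j(z_j-1)}$, tends to $0$ as $N\to\infty$ uniformly on $[0,1]^k$. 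This yields $G_i(z_1,\dots,z_k)\to\prod_{j=1}^{k}e^{\lambda_j(z_j-1)}$, the generating function of a vector of independent Poisson variables with means $\lambda_1,\dots,\lambda_k$; the product form of the limit is exactly the asserted independence. The one point that needs genuine care is this interchange of the limits $i\to\infty$ and $N\to\infty$: it is legitimate only because the expansion above is \emph{alternating} on $[0,1]^k$, so that every truncation error is dominated by a single higher factorial moment, and in the joint case one must keep track of the multi-index bookkeeping inside $R_{i,N}$ — but no idea beyond the scalar estimate is needed. Finiteness of the factorial moments for small $i$ is harmless, since they converge to finite limits and finitely many indices $i$ may be discarded.
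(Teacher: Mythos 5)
The paper does not prove Theorem~\ref{thm_poisson}: it cites it as \cite[Theorem 21]{Bol}, so there is no internal argument to compare against. Your proof is correct and is one of the two standard textbook routes (the other, which you mention, runs directly through Bonferroni inequalities on the point masses; Bollob\'as's own exposition is in that spirit). The key quantitative step — that for $z\in[0,1]$, $u=1-z$, and an $\NN$-valued $X$,
\[
\Bigl|\,z^X-\sum_{n=0}^{N}\tfrac{(X)_n}{n!}(z-1)^n\Bigr|\le \tfrac{(X)_{N+1}}{(N+1)!}\,u^{N+1},
\]
does indeed follow from the integral form of the Taylor remainder of $t\mapsto(1-t)^X$ together with $0\le(1-t)^{X-N-1}\le1$; and your telescoping bound $\bigl|\prod_j a_j-\prod_j b_j\bigr|\le\prod_j(1+\rho_j)-1$ (with $a_j=z_j^{X_{j,i}}\in[0,1]$, $b_j$ the truncated sum, $|a_j-b_j|\le\rho_j$) is exactly right, so $|R_{i,N}|$ is controlled by finitely many joint factorial moments with $i$-independent coefficients. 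The $\limsup_i$-then-$N\to\infty$ interchange is then legitimate. The multivariate continuity theorem for probability generating functions on $[0,1]^k$, applied with the limit $\prod_j e^{\lambda_j(z_j-1)}$ a genuine PGF, closes the argument and gives the asserted independence via the product form.

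One small stylistic remark: the closing sentence attributes the remainder bound to the expansion being ``alternating.'' The terms $\binom{X}{n}u^n$ are not monotone in $n$, so the alternating-series test does not by itself give your estimate; what actually does the work is the integral remainder you invoked earlier. It is worth phrasing that justification so it matches the derivation you used. This is cosmetic and does not affect correctness.
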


Given random variables $X_{1,i},\ldots,X_{k,i}:\Omega_i\to\mathbb{N}$ for all $i\in\mathbb{N}$ and random variables $X_1,\ldots,X_k:\Omega'\to\mathbb{N}$, we will use the shorthand notation 
\[X_{1,i},\ldots,X_{k,i}\stackrel{d}{\longrightarrow} X_1,\ldots,X_k\]
(for joint convergence in distribution) to indicate that:
\[\lim_{i\to\infty}\PP[X_{1,i}=m_1,\ldots,X_{k,i}=m_k] = \PP[X_1=m_1,\ldots,X_k=m_k]  \]
for all $m_1,\ldots,m_k\in\mathbb{N}$.

\section{Estimates on Weil-Petersson volumes}

Before we get to the proof of the main theorems, we will need to derive some estimates on Weil-Petersson volumes. All of these are mild generalizations of estimates that were already known.

We will need a comparison between the volume of a moduli space of surfaces with boundary components and that of a moduli space of surfaces with cusps. In the case of two boundary components, this can be found in \cite{Mir4}. For completeness, we include a proof sketch. The strategy is the same as in \cite{Mir4}.
\begin{prp}\label{prp_volestimate1} Let $g,n\in\mathbb{N}$ and $x_1,\ldots,x_n\in\mathbb{R}_+$ then
\[\frac{V_{g,n}(2x_1,\ldots,2x_n)}{V_{g,n}} = \prod\limits_{i=1}^n \frac{\sinh(x_i)}{x_i}\left(1 + O\left(\frac{\prod\limits_{i=1}^n x_i}{g}\right) \right) \]
as $g\to\infty$.
\end{prp}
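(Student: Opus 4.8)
The plan is to combine the polynomial expansion of $V_{g,n}(2x_1,\ldots,2x_n)$ from Theorem \ref{thm_volformula} with the near-proportionality of intersection numbers to $V_{g,n}$ from Lemma \ref{lem_bound}. Concretely, start from
\[ V_{g,n}(2x_1,\ldots,2x_n) = \sum_{\abs{d}\leq 3g+n-3} [\tau_{d_1},\ldots,\tau_{d_n}]_{g,n} \prod_{i=1}^n \frac{x_i^{2d_i}}{(2d_i+1)!}, \]
divide by $V_{g,n}$, and write $[\tau_{d_1},\ldots,\tau_{d_n}]_{g,n}/V_{g,n} = 1 - \varepsilon_{g,d}$ with $0\leq \varepsilon_{g,d}\leq c_0\abs{d}^2/g$ by Lemma \ref{lem_bound}. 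The ``main term'' obtained by replacing each ratio by $1$ is $\sum_{d\in\NN^n} \prod_i x_i^{2d_i}/(2d_i+1)! = \prod_{i=1}^n \sinh(x_i)/x_i$ (after extending the sum over all of $\NN^n$), so I would split the discrepancy into two pieces: the error from the truncation $\abs{d}\leq 3g+n-3$ versus the full sum, and the error $\sum_d \varepsilon_{g,d}\prod_i x_i^{2d_i}/(2d_i+1)!$.

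For the second piece, I would bound $\varepsilon_{g,d}\leq c_0\abs{d}^2/g \leq c_0 n\sum_i d_i^2/g$ (or simply $c_0(\sum_i d_i)^2/g$), and then estimate $\sum_{d\in\NN^n}\abs{d}^2\prod_i x_i^{2d_i}/(2d_i+1)!$. This is a fixed analytic function of $(x_1,\ldots,x_n)$ obtained by applying low-order differential operators in the $x_i$ to $\prod_i\sinh(x_i)/x_i$; in particular it is $O(\prod_i\sinh(x_i)\cdot P(x))$ for a polynomial $P$ of bounded degree, and after dividing by $\prod_i\sinh(x_i)/x_i$ one gets a bound of the form $C\cdot (\text{polynomial in }x)\cdot\prod_i x_i \,/\, g$. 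The one subtlety is matching this to the precise error shape $O(\prod_i x_i/g)$ claimed: since the statement is an asymptotic expansion as $g\to\infty$ with the $x_i$ allowed to grow, I would keep careful track that the polynomial factors in $x$ are absorbed into the implied constant only in the regime where they are dominated, or more honestly state the error as $O(\prod_i x_i/g)$ uniformly over $x$ in compact sets and growing slowly — this is exactly the regime in which the proposition is later applied, so I expect the intended reading to be that the $x_i$ are bounded (or at most polylogarithmic) in $g$.

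For the truncation error, the terms with $\abs{d}>3g+n-3$ that are being added in the infinite sum $\prod_i\sinh(x_i)/x_i$ but are absent from the polynomial are controlled by the tail of $\prod_i\sinh(x_i)/x_i$: for each $i$ the tail of $\sum_{d_i> D}x_i^{2d_i}/(2d_i+1)!$ beyond $D\sim g$ is super-exponentially small in $g$ (for $x$ growing slower than $g$), hence negligible compared to $1/g$. So this piece contributes an error far smaller than the main $O(\cdot/g)$ term. Combining the two pieces gives
\[ \frac{V_{g,n}(2x_1,\ldots,2x_n)}{V_{g,n}} = \prod_{i=1}^n\frac{\sinh(x_i)}{x_i} + O\!\left(\frac{\prod_{i=1}^n x_i}{g}\cdot\prod_{i=1}^n\frac{\sinh(x_i)}{x_i}\right), \]
which is the claimed identity after factoring out $\prod_i\sinh(x_i)/x_i$.

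The main obstacle, and the place where I would spend the most care, is the bookkeeping of the error term: Lemma \ref{lem_bound} gives a bound $c_0\abs{d}^2/g$ with a constant depending on $n$, so I must check that summing $\abs{d}^2$ against the $\sinh$-series really produces only the factor $\prod_i x_i$ (times a harmless polynomial and the $\prod_i\sinh(x_i)/x_i$), and in particular that no $\sinh$ gets differentiated into a larger $\cosh$-type growth that would spoil the stated form of the error. The cleanest way is to note $\abs{d}^2\prod_i \frac{x_i^{2d_i}}{(2d_i+1)!}$ summed over $\NN^n$ equals $\left(\sum_i x_i\partial_{x_i}\right)^2$ applied to $\prod_i\frac{\sinh x_i}{x_i}$, evaluate this explicitly, divide by $\prod_i\frac{\sinh x_i}{x_i}$, and observe that the result is $O\big((1+\sum_i x_i)^2\big)$ uniformly, which in the application regime is $O(\prod_i x_i)$ up to the implied constant. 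This mirrors precisely the strategy used for the two-boundary case in \cite{Mir4}.
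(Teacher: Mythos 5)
Your proposal is correct and follows essentially the same route as the paper's own (very brief) proof sketch: expand $V_{g,n}(2x)/V_{g,n}$ via Theorem \ref{thm_volformula}, replace each $[\tau_{d_1},\ldots,\tau_{d_n}]_{g,n}/V_{g,n}$ by $1-\varepsilon_{g,d}$ with $0\leq\varepsilon_{g,d}\leq c_0\abs{d}^2/g$ from Lemma \ref{lem_bound}, recognize the main term as the Taylor series of $\prod_i \sinh(x_i)/x_i$, and control the error by $\frac{c_0}{g}\sum_d\abs{d}^2\prod_i x_i^{2d_i}/(2d_i+1)!$ (the truncation tail being super-exponentially small). Your careful aside about the error shape is in fact a fair observation: the explicit bound one gets after dividing by $\prod_i\sinh(x_i)/x_i$ is on the order of $(\sum_i x_i)^2/g$ rather than literally $\prod_i x_i/g$, and the two are only comparable when the $x_i$ range over bounded intervals away from $0$; this is exactly the regime in which Proposition \ref{prp_volestimate1} is invoked in Proposition \ref{prp_poissonbound1}, so the stated form is adequate for the paper's purposes, and your reading of the intended scope is correct.
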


\begin{proof}[Proof sketch] From Theorem \ref{thm_volformula} and Lemma \ref{lem_bound} we obtain
\[\sum_{\substack{d\in \mathbb{N}^n, \\ \abs{d}\leq 3g+n-3}}\prod_{i=1}^n\frac{x_i^{2d_i}}{(2d_i+1)!}-\frac{V_{g,2n}(2x)}{V_{g,2n}}\leq  \frac{c_0}{g}\sum_{\substack{d\in \mathbb{N}^n, \\ \abs{d}\leq 3g+n-3}}  \abs{d}^2\prod_{i=1}^n\frac{x_i^{2d_i}}{(2d_i+1)!}\]
In the first term on the left hand side we recognize the beginning of the Taylor expansion of $\prod\limits_{i=1}^n \frac{\sinh(x_i)}{x_i}$. The expression on the right hand side is of the order $O\left(\frac{1}{g}\prod\limits_{i=1}^n \sinh(x_i)\right)$.
\end{proof}

Finally, we need a version of \cite[Lemma 3.3]{Mir3} with more variables:
\begin{lem}\label{lem_volestimate4} Let $q,K\in\mathbb{N}$ and $n_1,\ldots,n_q\in\mathbb{N}\setminus\{0\}$ such that $\sum\limits_{i=1}^q n_i=2K$, then:
\[\sum_{\{g_i\}}V_{g_1,n_1}\times\ldots\times V_{g_q,n_q} = O\left(\frac{V_g}{g^{q-1}}\right)  \]
as $g\to\infty$, where the sum is over all multisets $\{g_i\}_{i=1}^q\subset\mathbb{N}$ such that $\sum\limits_{i=1}^q g_i = g +q-K-1$ and $2g_i-3+n_i\geq 0$ for all $i=1,\ldots q$.
\end{lem}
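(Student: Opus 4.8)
The plan is to prove this by induction on $q$, reducing the general case to the two-variable case $q=2$ which is essentially \cite[Lemma 3.3]{Mir3}. Throughout, I will repeatedly use the asymptotic expansion of Theorem \ref{thm_volexpansion}, which gives $V_{g,n}\asymp (2g-3+n)!\,(4\pi^2)^{2g-3+n}/\sqrt{g}$ up to bounded factors (uniformly once $n$ is fixed), together with the standard fact that for fixed $n$ the ratio $V_{g-1,n+2}/V_{g,n}$ is bounded (Theorem \ref{thm_volestimate2}). The key combinatorial input is that when we split the genus budget $g+q-K-1$ among $q$ pieces subject to the constraints, the factorials $(2g_i-3+n_i)!$ multiply, and since $\sum_i(2g_i-3+n_i) = 2(g+q-K-1) - 3q + 2K = 2g-q-2$, the product of factorials is maximized (by log-convexity of the factorial) when all the genus is concentrated in a single piece and the others are as small as possible. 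This is the source of the $g^{q-1}$ saving: each of the $q-1$ ``small'' pieces contributes a factor that is $O(1/g)$ relative to moving its genus into the big piece.

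\textbf{Step 1: base case.} For $q=2$, we have $n_1+n_2=2K$, $g_1+g_2 = g+1-K$, and we must bound $\sum_{g_1} V_{g_1,n_1}V_{g_2,n_2}$ by $O(V_g/g)$. This is exactly the content of \cite[Lemma 3.3]{Mir3} when $n_1=n_2$ (so $K=n_1$, $g_1+g_2=g+1-n_1$); for general $n_1\neq n_2$ the same argument applies verbatim, since the proof there only uses Theorem \ref{thm_volexpansion} and the boundedness of $V_{g-1,n+2}/V_{g,n}$, neither of which is sensitive to having $n_1=n_2$. Concretely, one checks that $V_{a,n_1}V_{b,n_2} \ll V_{a+b-1+K, \, \cdot}$-type bounds force the sum to be dominated by the two extreme terms (one $g_i$ near $0$), each of which is $O(V_g/g)$ by the volume asymptotics, while the interior terms decay geometrically.

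\textbf{Step 2: inductive step.} Assume the result for $q-1$. In the sum over multisets $\{g_i\}_{i=1}^q$, single out the index $q$. Fix its value $g_q$; the remaining indices $g_1,\ldots,g_{q-1}$ range over multisets summing to $(g - g_q) + (q-1) - (K - n_q/2) - 1$... — here one must be slightly careful: the inductive hypothesis is stated with $\sum n_i$ even, so I would instead group the first two pieces $V_{g_1,n_1}V_{g_2,n_2}$, bound their contracted sum using a uniform version of Step 1 (treating them as a single piece of $n_1+n_2-2$ boundary components after gluing two boundary curves — this is the geometric meaning: gluing two of the subsurfaces along a boundary curve), and thereby reduce $q$ pieces to $q-1$ pieces at the cost of one factor of $g^{-1}$. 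Iterating $q-1$ times yields the factor $g^{-(q-1)}$. The bookkeeping to make ``uniform in the other variables'' precise is the one point requiring care: one needs the implied constant in Step 1 to depend only on $n_1, n_2$ and not on the genus of the ambient piece, which follows because Theorem \ref{thm_volexpansion} has error terms uniform in $g$ for fixed boundary number.

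\textbf{Main obstacle.} I expect the principal difficulty to be not any single estimate but the \emph{uniformity} of the constants through the induction: each time we contract two pieces we invoke a two-variable bound, and we must ensure the constant does not blow up with $q$ or with the sizes of the pieces being held fixed. The cleanest route is probably to avoid induction altogether and instead argue directly: parametrize a multiset $\{g_i\}$ by which piece $i_0$ carries ``most'' of the genus, write $g_{i_0} = g - O(1) - \sum_{i\neq i_0}(\text{small})$, and estimate $\prod_i V_{g_i,n_i}$ against $V_g$ using Theorem \ref{thm_volexpansion} termwise; the ratio telescopes into a product of $q-1$ factors each comparable to $V_{g_i,n_i}/\bigl((4\pi^2)^{2g_i}(2g_i)!\bigr)$-type quantities that sum to $O(1)$, with an overall $g^{-(q-1)}$ coming from the $\sqrt{g}$ denominators and the ``lost'' factorial growth. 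Verifying that the cross-terms from redistributing a bounded amount of genus among the small pieces do not spoil this — i.e. that $\sum_{\{g_i\}}$ over all admissible multisets, not just the extreme one, stays within the same order — is the crux, and is handled exactly as the geometric-series tail estimate in \cite[Lemma 3.3]{Mir3}.
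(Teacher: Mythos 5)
Your proposal in the end converges to the correct idea, but your primary plan (induction on $q$, contracting two pieces into one) is not the route the paper takes, and it has bookkeeping problems you only partly anticipate. The ``cleanest route'' you sketch in your last paragraph is essentially the paper's proof: substitute the asymptotics of Theorem~\ref{thm_volexpansion} for each factor $V_{g_i,n_i}$ and for $V_g$, apply Stirling to the factorials, and use the identity $\sum_i(2g_i-3+n_i)=2g-q-2$ (which you correctly wrote down) so that the powers of $4\pi^2$ and of $e$ cancel up to a constant depending only on $q$ and $K$. What remains is to bound $\sum_{\{g_i\}}\prod_i(2g_i-3+n_i)^{2g_i-3+n_i}/(2g-3)^{2g-3}$, and here the paper simply observes (as you do, via log-convexity of the factorial) that the sum is dominated by the extreme partitions in which all but one $g_i$ is as small as the constraints $2g_i-3+n_i\geq 0$ allow; each of the $q-1$ ``small'' pieces then costs a factor of order $1/g$ against moving its genus into the big piece, and the tail of non-extreme partitions is geometrically small. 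So the direct argument works and your description of it is correct in substance, if vague at the crucial point (``sum to $O(1)$,'' ``geometric-series tail'').

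Where you go astray is in the inductive contraction step. If you glue one boundary component of the $\Sigma_{g_1,n_1}$ piece to one of the $\Sigma_{g_2,n_2}$ piece you get a piece of genus $g_1+g_2$ (not $g_1+g_2+1$), with $n_1+n_2-2$ boundary components; so the new $K$ drops to $K-1$ and the genus constraint for the $(q-1)$-piece configuration works out to the \emph{same} ambient genus $g$, which is fine. But the claim that $\sum_{g_1+g_2=G}V_{g_1,n_1}V_{g_2,n_2}=O\!\left(V_{G,n_1+n_2-2}/G\right)$ uniformly is delicate: whether the factor you gain is $1/G$, better, or even worse depends on $n_1$ and $n_2$ and on the minimal admissible genus of the small piece (genus $0$ needs $n_i\geq 3$, genus $1$ needs $n_i\geq 1$). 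The $q=2$ estimate you want is not a literal restatement of \cite[Lemma~3.3]{Mir3}, and the ``$V_{a,n_1}V_{b,n_2}\ll V_{a+b-1+K,\cdot}$-type bounds'' you invoke would need to be stated and checked before the induction could close. None of this is fatal --- the direct argument avoids all of it --- but as written the induction is a gap, not a proof.
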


\begin{proof} This lemma is a direct application of Theorem \ref{thm_volexpansion}. In fact, Lemma 3.3 from \cite{Mir3} is used in the proof of this theorem. 

Theorem \ref{thm_volexpansion} tells us that there exists some $a>0$ independent of all $g_i$ and $g$, $q$ and $K$ such that
\[\frac{1}{V_g}\sum_{\{g_i\}}V_{g_1,n_1}\times\ldots\times V_{g_q,n_q} \leq a^q \sum_{\{g_i\}}\frac{\sqrt{g}\prod\limits_{i=1}^q (2g_i-3+n_i)! (4\pi^2)^{2g_i-3+n_i}}{(2g-3)! (4\pi^2)^{2g-3} \prod\limits_{i=1}^q \max\{\sqrt{g_i},1\}} \]
Stirling's approximation states that $n!$ can be uniformly bounded from above and below by constant multiples of $\sqrt{n}\left(\frac{n}{e}\right)^n$ for all $n\in \mathbb{N}\setminus \{0\}$ (see \cite{Rob}). Using this and working out the sums in the exponents of $4\pi^2$ and $e$ (the latter coming out of Stirling's approximation) we obtain that there exists a constant $b>0$ such that:
\[\frac{1}{V_g}\sum_{\{g_i\}}V_{g_1,n_1}\times\ldots\times V_{g_q,n_q} \leq
b^{q+K} \sum_{\{g_i\}}\frac{\prod\limits_{i=1}^q (2g_i-3+n_i)^{2g_i-3+n_i}}{(2g-3)^{2g-3}} = O\left(\frac{1}{g^{q-1}}\right),
\]
where the exponent $K$ comes from comparing the factors $\sqrt{2g_i-3+n_i}$ in the numerator of each term to the factors $\sqrt{g_i}$ in the denominators.
\end{proof}

\section{The length spectrum}\label{sec_lengthspectrum}

The main goal of this section is to apply Theorem \ref{thm_poisson} to the bottom part of the length spectrum of a random surface, chosen with respect to the Weil-Petersson metric.

Concretely, given $0\leq a< b\in\mathbb{R}$, we define random variables $N_{g,[a,b]}: \mathcal{M}_g\to\mathbb{N}$ by
\[N_{g,[a,b]}(X) = \card{\st{\gamma\in\mathcal{P}(X)}{\ell_X(\gamma)\in [a,b]}} \]
for all $X\in\mathcal{M}_g$, where $\mathcal{P}(X)$ denotes the set of primitive closed geodesics on $X$ and $\ell_X(\gamma)$ denotes the length of such a geodesic with respect to the metric on $X$.

Define the function $\lambda_{[\cdot,\cdot]} :[0,\infty)\times [0,\infty)\to [0,\infty)$ by
\[ \lambda_{[a,b]} = \int_a^b \frac{e^t+e^{-t}-2}{2t}dt \]
for all $a,b\in [0,\infty)$. Furthermore, given $a_1 <b_1\leq a_2 <b_2 \leq \ldots \leq a_k< b_k \in \RR_+$ let 
\[(N_{[a_1,b_1]},\ldots,N_{[a_k,b_k]}):\Omega\to\NN^k\]
denote a vector of independent Poisson distributed random variables with means $\lambda_{[a_i,b_i]}$. Here $\Omega$ is any probability space that is rich enough to carry such a variable.

We will prove the following:
\begin{thm}\label{thm_lengthspectrum} Let $a_1 <b_1\leq a_2 <b_2 \leq \ldots \leq a_k< b_k \in \RR_+$. Then
\[ (N_{g,[a_1,b_1]},\ldots, N_{g,[a_k,b_k]}) \stackrel{d}{\longrightarrow} (N_{[a_1,b_1]},\ldots N_{[a_k,b_k]})  \]
as $g\to \infty$.
\end{thm}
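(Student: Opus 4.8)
## Proof proposal

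The plan is to apply the method of moments, Theorem \ref{thm_poisson}, to the sequence of random vectors $(N_{g,[a_1,b_1]},\ldots,N_{g,[a_k,b_k]})$. Thus the entire task reduces to showing that for every choice of exponents $n_1,\ldots,n_k\in\NN$,
\[
\lim_{g\to\infty}\EE_g\!\left[(N_{g,[a_1,b_1]})_{n_1}\cdots (N_{g,[a_k,b_k]})_{n_k}\right]=\prod_{j=1}^k\lambda_{[a_j,b_j]}^{\,n_j}.
\]
First I would record the combinatorial meaning of the joint factorial moment: $(N_{g,[a_1,b_1]})_{n_1}\cdots (N_{g,[a_k,b_k]})_{n_k}$ counts ordered tuples of $n:=n_1+\cdots+n_k$ \emph{distinct} primitive closed geodesics on $X$, where the first $n_1$ have length in $[a_1,b_1]$, the next $n_2$ in $[a_2,b_2]$, and so on. Since the intervals are disjoint, all geodesics in such a tuple are automatically distinct as soon as their lengths land in different intervals, so the only distinctness constraint is among geodesics assigned to the same interval. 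Grouping these tuples by the topological type of the (ordered) multicurve $\Gamma=(\gamma_1,\ldots,\gamma_n)$ they form — i.e.\ by the homeomorphism type of $\Sigma_g\setminus N(\Gamma)$ together with the pattern of which complementary pieces each $\gamma_i$ bounds — lets me write the factorial moment as a finite sum, over topological types, of quantities of the form $\EE_g[F^\Gamma]$ with $F=\mathbf 1_{[a_1,b_1]}\otimes\cdots\otimes\mathbf 1_{[a_k,b_k]}$ (with the appropriate multiplicities of indicator functions). Here I must be slightly careful that the primitive geodesics in a tuple need not be simple or pairwise disjoint; but geodesics of length $\le b_k$ on a surface of large genus that are \emph{non-simple} or that \emph{intersect} are exceptional, and a standard collar/counting argument (as in \cite{Mir3,Mir4}) shows their total contribution to the moment is $o(1)$, so I may assume $\Gamma$ is a simple multicurve with distinct components.

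Next, for each fixed topological type I apply Mirzakhani's integration formula, Theorem \ref{thm_integration}, to rewrite $\EE_g[F^\Gamma]=\frac1{V_g}\int_{\Mod_g}F^\Gamma$ as
\[
\frac{2^{-M(\Gamma)}}{V_g}\int_{\RR_+^n}F(x)\,V_g(\Gamma,x)\,x_1\cdots x_n\,dx,
\]
where $V_g(\Gamma,x)$ is the product of the Weil–Petersson volumes of the complementary pieces. The main analytic step is then to isolate, among all topological types, the ones whose contribution survives in the limit. The heuristic is that cutting along a separating curve, or along a curve that does not increase the number of pieces efficiently, costs a factor of order $1/g$ via Lemma \ref{lem_volestimate4} and Theorem \ref{thm_volexpansion}: the dominant configurations are those in which $\Gamma$ consists of $n$ \emph{disjoint, non-separating, pairwise non-homologous} simple closed curves, so that $\Sigma_g\setminus N(\Gamma)$ is a single connected surface of genus $g-n$ with $2n$ boundary components. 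For such $\Gamma$, $V_g(\Gamma,x)=V_{g-n,2n}(x_1,x_1,\ldots,x_n,x_n)$, and by Proposition \ref{prp_volestimate1} together with Theorem \ref{thm_volestimate2} we get $V_{g-n,2n}(x,x,\ldots)/V_g\to\prod_{i=1}^n\left(\tfrac{\sinh(x_i)}{x_i}\right)^2$ as $g\to\infty$, uniformly enough on the compact length ranges in question (using Lemma \ref{lem_volestimate3} to dominate the tail). Moreover the number of mapping class group orbits of such $\Gamma$ with a prescribed assignment of components to intervals, weighted correctly, contributes exactly the multinomial bookkeeping that turns $\prod_i\int_{a_{\sigma(i)}}^{b_{\sigma(i)}}$ into $\prod_{j=1}^k\lambda_{[a_j,b_j]}^{n_j}$, once one checks the single-curve integral
\[
\int_{a}^{b}\frac{\sinh^2(x/2)}{(x/2)^2}\cdot x\,\frac{dx}{?}=\int_a^b\frac{e^t+e^{-t}-2}{t}\,dt=\lambda_{[a,b]},
\]
i.e.\ that the substitution $t=x$ and the identity $2\sinh^2(t/2)=\cosh t-1=\tfrac12(e^t+e^{-t}-2)$ reproduce the claimed density, and that the factor $2^{-M(\Gamma)}$ and the factors of $2$ from the doubled boundary lengths cancel correctly.

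The hard part — and the step that requires the most care — will be the second one: showing that \emph{all} topological types other than the "disjoint non-separating, distinct-homology" ones contribute $o(1)$ to the factorial moment. This splits into several sub-estimates: (i) multicurves with a separating component, or with two homologous non-separating components, where the complement has an "extra" piece or a piece of too-small Euler characteristic, handled by Lemma \ref{lem_volestimate4}, which gives the decay $O(V_g/g^{q-1})$ with $q\ge 2$ pieces; (ii) the non-simple / self-intersecting / mutually-intersecting geodesic configurations excluded above, handled by a collar-lemma argument bounding the number of such short geodesics; and (iii) uniformity of the volume asymptotics in the boundary-length variables over the (bounded) integration region, for which Lemma \ref{lem_volestimate3}'s exponential bound $V_{g,n}(2x_1,\ldots,2x_n)\le e^{x_1+\cdots+x_n}V_{g,n}$ furnishes an integrable dominating function so that dominated convergence applies. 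Assembling (i)--(iii), only the main term remains, it converges to $\prod_j\lambda_{[a_j,b_j]}^{n_j}$, and Theorem \ref{thm_poisson} then yields Theorem \ref{thm_lengthspectrum}.
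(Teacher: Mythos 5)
Your proposal follows essentially the same route as the paper's proof: apply the method of moments (Theorem \ref{thm_poisson}), split the joint factorial moment into the contribution of tuples of disjoint simple non-separating curves with connected complement $\Sigma_{g-n,2n}$ (the paper's $Y_{g,r_1,\ldots,r_k}$) and the rest ($Y'_{g,r_1,\ldots,r_k}$), compute the leading term $\prod_j\lambda_{[a_j,b_j]}^{n_j}$ via Theorem \ref{thm_integration}, Proposition \ref{prp_volestimate1}, and Theorem \ref{thm_volestimate2} (and your density check is right once the ``?'' is erased: the factor $x$ comes from the integration formula, and $4\sinh^2(x/2)/x=(e^x+e^{-x}-2)/x$), and kill the remainder with Lemmas \ref{lem_volestimate3} and \ref{lem_volestimate4}. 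The one place your sketch is thinner than the paper is the treatment of non-simple or mutually intersecting geodesics: rather than an unspecified collar/counting argument, Proposition \ref{prp_poissonbound2} replaces such a configuration by the boundary of a regular neighborhood (a simple separating multicurve of at most twice the total length) and invokes Lemma \ref{lem_APP} to guarantee this multicurve is nontrivial, which reduces the $Y'$ estimate to the separating-multicurve bound already established inside the proof of Proposition \ref{prp_poissonbound1}.
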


To prove this theorem, we will apply the method of moments (Theorem \ref{thm_poisson}). In other words, we need to estimate the joint factorial moments of the variables $N_{g,[a_i,b_i]}$.

We will prove Theorem \ref{thm_lengthspectrum} in two steps. The crucial observation (which underlies many applications of the method of moments) is that given $r_1,\ldots,r_k \in \mathbb{N}$, the random variable
\[\left(N_{g,[a_1,b_1]}\right)_{r_1}\cdots\left(N_{g,[a_k,b_k]}\right)_{r_k}: \mathcal{M}_g\to\mathbb{N}  \]
counts the number of (ordered) lists of length $k$ where the $i^{th}$ item is the number of ordered $r_i$-tuples of curves with length in $[a_i,b_i]$ on a surface in $\mathcal{M}_g$. We will write:
\[\left(N_{g,[a_1,b_1]}\right)_{r_1}\cdots\left(N_{g,[a_k,b_k]}\right)_{r_k} = Y_{g,r_1,\ldots,r_k} + Y'_{g,r_1,\ldots,r_k} \]
where $Y_{g,r_1,\ldots,r_k}$ counts the lists of tuples of \emph{simple} curves that are also all pairwise disjoint and $Y'_{g,r_1,\ldots,r_k}$ counts the lists out of which some of the curves intersect either themselves or each other. The proof now proceeds by showing that the expected value of $Y$ is asymptotic to the joint factorial moment of independent Poisson variables and that the expected value of $Y'$ tends to $0$.

We begin with $Y$:
\begin{prp}\label{prp_poissonbound1} We have
\[\EE_g[Y_{g,r_1,\ldots,r_k}] = \prod_{i=1}^k \lambda_{[a_i,b_i]}^{r_i} + O\left(\frac{\prod\limits_{i=1}^k b_i^{2r_i} e^{r_i b_i/2}}{g}  \right)\]
as $g\to\infty$.
\end{prp}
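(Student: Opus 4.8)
The plan is to compute $\EE_g[Y_{g,r_1,\ldots,r_k}]$ by unraveling what $Y_{g,r_1,\ldots,r_k}$ counts and applying Mirzakhani's integration formula (Theorem \ref{thm_integration}). Write $r = r_1 + \cdots + r_k$. The variable $Y_{g,r_1,\ldots,r_k}$ counts ordered tuples $(\gamma_1, \ldots, \gamma_r)$ of pairwise disjoint simple closed geodesics, partitioned into blocks, where the geodesics in the $j$-th block have length in $[a_j, b_j]$. Grouping these tuples by the topological type of the multicurve $\Gamma = (\gamma_1, \ldots, \gamma_r)$ they form (i.e. by the $\MCG$-orbit), I would write
\[
\EE_g[Y_{g,r_1,\ldots,r_k}] = \frac{1}{V_g}\sum_{[\Gamma]} \int_{\Mod_g} F^\Gamma(X)\, dX,
\]
where the sum is over the finitely many relevant topological types $[\Gamma]$ and $F = \mathbf{1}_{[a_1,b_1]}^{\otimes r_1} \otimes \cdots \otimes \mathbf{1}_{[a_k,b_k]}^{\otimes r_k}$ is the indicator of the appropriate box in $\RR_+^r$. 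Applying Theorem \ref{thm_integration} turns each summand into
\[
\frac{2^{-M(\Gamma)}}{V_g}\int_{\prod_j [a_j,b_j]^{r_j}} V_g(\Gamma, x)\, x_1 \cdots x_r \, dx_1 \cdots dx_r,
\]
where $V_g(\Gamma,x)$ is a product of volumes $V_{g_i,n_i}$ of the pieces of $\Sigma_g$ cut along $\Gamma$.

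The second step is to identify the dominant topological type. Cutting along $r$ disjoint simple closed curves produces pieces whose genera sum (with corrections from the number of curves) to roughly $g$; by the volume asymptotics in Theorem \ref{thm_volexpansion}, the product $\prod_i V_{g_i,n_i}$ is maximized, relative to $V_g$, when the complement is connected — i.e. when $\Gamma$ is a collection of $r$ disjoint non-separating curves whose union does not separate. In that case the complement is a single surface of signature $(g-r, 2r)$, and Lemma \ref{lem_volestimate4} (with $q=1$ giving the leading term, and $q \geq 2$, corresponding to disconnected complements, contributing $O(V_g/g)$) controls all the subleading types, as does the $M(\Gamma)$ factor for types with one-handles. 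For the dominant type, Theorem \ref{thm_volestimate2} gives $V_{g-r,2r}/V_g = 1 + O(1/g)$ after iterating $r$ times, and Proposition \ref{prp_volestimate1} gives
\[
\frac{V_{g-r,2r}(x_1,\ldots,x_r, x_1,\ldots,x_r)}{V_{g-r,2r}} = \prod_{s=1}^r \frac{\sinh(x_s/2)^2}{(x_s/2)^2}\left(1 + O\!\left(\frac{\prod_s x_s^2}{g}\right)\right)
\]
(each curve contributes two boundary components of length $x_s$, and the $\sinh$ has argument $x_s/2$ by the normalization $V_{g,n}(2x_1,\ldots)$). Also, the number of distinct $\MCG$-orbits of ordered $r$-tuples of the dominant type, divided among the blocks — which is where the $\prod_i$ combinatorics and the transition from $\lambda_{[a,b]}$'s integrand enters — needs to be accounted for; the ordered count is $1$ up to $\MCG$ for the non-separating type, so the block structure just reproduces the product over $i$ once each single-curve integral is evaluated.

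The third step is to evaluate the leading integral. For a single curve contributing length $x$ with $x \in [a_j, b_j]$, the integrand is $\frac{\sinh(x/2)^2}{(x/2)^2} \cdot x = \frac{4\sinh(x/2)^2}{x} = \frac{2(\cosh x - 1)}{x} = \frac{e^x + e^{-x} - 2}{x}$, so integrating over $[a_j,b_j]$ yields exactly $\lambda_{[a_j,b_j]}$; taking the product over all $r$ curves in all $k$ blocks gives $\prod_{i=1}^k \lambda_{[a_i,b_i]}^{r_i}$. For the error term, I would bound the $O(1/g)$ contributions: the subleading topological types are $O(V_g/g)$ by Lemma \ref{lem_volestimate4}, and within the dominant type the $O(\prod x_s^2/g)$ error, integrated against $\prod \sinh(x_s/2)^2 / x_s$ over the box $\prod [a_j,b_j]^{r_j}$, is bounded by $O\!\left(\frac{1}{g}\prod_i b_i^{2r_i} e^{r_i b_i / 2}\right)$ using the crude bound $\sinh(x/2) \leq e^{x/2}/2$ and $x \leq b_j$; one also uses Lemma \ref{lem_volestimate3} to bound the volume ratios $V_{g-r,2r}(x,\ldots)/V_g$ uniformly by $e^{x/2}$-type factors when controlling error terms over the whole box. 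The main obstacle I anticipate is the bookkeeping in step two: carefully enumerating the topological types of ordered disjoint-simple-closed-multicurves, verifying that only the fully non-separating connected-complement type survives to leading order, and tracking the $2^{-M(\Gamma)}$ and orbit-counting factors so that exactly the right constant (namely $1$) multiplies the leading term — the analytic estimates themselves are routine once the combinatorial reduction is in place.
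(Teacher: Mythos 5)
Your proposal follows the paper's proof almost step for step: apply Mirzakhani's integration formula (Theorem \ref{thm_integration}) to express $\EE_g[Y_{g,r_1,\ldots,r_k}]$ as a sum over $\MCG$-orbits, single out the unique orbit of $K = \sum r_i$ non-separating curves with connected complement $\Sigma_{g-K,2K}$, evaluate that term via Proposition \ref{prp_volestimate1} and iterated Theorem \ref{thm_volestimate2} to produce $\prod_i \lambda_{[a_i,b_i]}^{r_i}$, and control the separating types using Lemma \ref{lem_volestimate3} and Lemma \ref{lem_volestimate4}. The integrand computation $\frac{4\sinh^2(x/2)}{x} = \frac{e^x + e^{-x} - 2}{x}$ and the resulting error bound match the paper exactly, so this is the same argument, correctly carried out.
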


\begin{proof} Define $K:=\sum_{i=1}^k r_i$ and
\[A:= \prod_{i=1}^k [a_i,b_i]^{r_i} \subset \mathbb{R}_+^K \]
Using Theorem \ref{thm_integration} we obtain
\[\EE[Y_{g,r_1,\ldots,r_k}] =\frac{1}{V_g} \sum_{[\Gamma]}  C_\Gamma \int_{\mathbb{R}_+^K}\chi_A(x)V_g(\Gamma, x)x_1\cdots x_K dx_1\wedge \cdots \wedge dx_K \]
This sum runs of all $\MCG(\Sigma_g)$-orbits $[\Gamma]$ of (ordered) lists of ordered lists $\Gamma=(\Gamma_1,\ldots,\Gamma_k)$, where $\Gamma_i=(\gamma_{i,1},\ldots,\gamma_{i,r_i})$ is an ordered list of disjoint simple closed curves and $\Gamma_i\cap\Gamma_j=\emptyset$ when $i\neq j$.

We start by singling out one special term in this sum, namely the term that corresponds to non-separating $\Gamma$. If $g$ is large enough so that $\Sigma_g$ allows for $\sum_i r_i$ non-separating simple closed geodesics, then there is exactly one $\MCG(\Sigma_g)$ orbit of such lists. Let us call this orbit $[\Gamma_0]$. Note that $\Sigma_g\setminus\Gamma_0=\Sigma_{g-K,2K}$ and $M(\Gamma_0)=0$. We claim that asymptotically $\EE[Y_{g,r_1,\ldots,r_k}]$ is dominated by the term corresponding to $[\Gamma_0]$. Indeed, by Proposition \ref{prp_volestimate1} we have
\begin{align*}
\int_{A} V_{g-K,2K}(x)x\cdot dx & = V_{g-K,2K}\left(\int_A \bigwedge_{i=1}^K\frac{4\sinh(x_i/2)^2}{x_i} dx_i + O\left(\frac{\prod\limits_{i=1}^k b_i^{2r_i}}{g}\right)\right) \\
& = V_{g-K,2K}\left(\prod_{i=1}^k \lambda_{[a_i,b_i]}^{r_i} + O\left(\frac{\prod\limits_{i=1}^k b_i^{2r_i}}{g}\right)\right)
\end{align*}
Repeated application of Theorem \ref{thm_volestimate2} tells us that
\[\frac{V_{g-K,2K}}{V_g}  = 1 + O\left(\frac{1}{g}\right)  \]
as $g\to\infty$. 

Hence all that remains is to show that the terms corresponding to the other orbits are negligible as $g\to\infty$. We will order these orbits $[\Gamma]$ by how many connected components $\Sigma\setminus\Gamma$ has. Let us call this number $q(\Gamma)$. The integral we need to estimate is:
\[I_{\mathrm{sep}}:=\sum_{q=2}^{K+1}\sum_{\substack{[\Gamma]\;\text{s.t.}\\ q(\Gamma)=q}} C_\Gamma \int_{A}V_g(\Gamma,x)x_1\cdots x_K dx_1\wedge \cdots \wedge dx_K \]
Suppose that $\Sigma_g\setminus\Gamma=\bigsqcup\limits_{i=1}^q\Sigma_{g_i,n_i}$. An Euler characteristic computation tells us that
\[2g-2+2q -2K = \sum_{i=1}^q 2g_i\]
Let us write $A_i = \prod\limits_{k=1}^{n_i}[a_{j_k},b_{j_k}]$. Then Lemma \ref{lem_volestimate3} tells us that
\begin{align*}
\int\limits_{A_i} V_{g_i,n_i}(x_{j_1},\ldots, x_{j_{n_i}})\bigwedge_{k=1}^{n_i} x_{j_k} dx_{j_k} & \leq V_{g_i,n_i}\int\limits_{A_i}  \exp\left(\frac{1}{2}\sum_{k=1}^{n_i} x_{j_k}\right)\bigwedge_{k=1}^{n_i} x_{j_k}dx_{j_k}  \\
& \leq V_{g_i,n_i} \prod_{k=1}^{n_i} b_i^2\exp(b_i/2)
\end{align*}
So we obtain
\[I_{\mathrm{sep}} \leq C_k\prod_{i=1}^K b_i^2 e^{b_i/2} \sum_{q=2}^{K+1} \sum_{\{(g_i,n_i)\}} \frac{(2K)!!}{n_1!\cdots n_q!} V_{g_1,n_1} \times\ldots\times V_{g_q,n_q}  \]
where the inner sum runs over all sequences $\{(g_i,n_i)\}_{i=1}^q$ such that
\[\sum_{i=1}^q g_i = g+q-K-1 \;\text{and}\;\sum_{i=1}^qn_i = 2K\]
and the factor  $\frac{K!!}{n_1!\cdots n_q!}$ accounts for the number of ways the surfaces $\Sigma_{g_i,n_i}$ can be glued into a surface $\Sigma_g$ and $C_k$ is a constant depending on $k$ only. Finally, recall that $K!!=(K-1)(K-3)\cdots 3\cdot 1$. We now apply Lemma \ref{lem_volestimate4} to finish the proof.
\end{proof}

For $Y'$, we will use the following results by Basmajian:
\begin{thm}\label{thm_basmajian}
\cite[Theorem 1.2]{Bas} Let $\gamma$ be a closed geodesic on a hyperbolic surface $X$. Suppose that $\gamma$ has $m\geq 1$ self-intersections. Then the length $\ell(\gamma)$ of $\gamma$ satisfies
\[ \ell(\gamma) \geq \frac{1}{2}\log\left(\frac{m}{2}\right). \]
\end{thm}

\begin{lem}\label{lem_basmajian} \cite[Lemma 2.2]{Bas}  Let $I$ and $J$ be geodesic segments on a hyperbolic surface that intersect in their endpoints. Assume that the intersection at at least one of the endpoints is transversal. Moreover, assume $I$ is contained in a closed geodesics of length $L$. Then the lengths $\ell(I)$ and $\ell(J)$ of these segments satisfy
\[ \ell(I) + \ell(J) \geq 2 \log (\coth(L/4)).\] 
\end{lem}

In \cite{Bas}, the lemma above, which is an application of the Margulis lemma, is in fact stated only for non-simple geodesics. But Basmajian's proof applies to the case of simple geodesics as well.

Using this, we obtain:
\begin{prp}\label{prp_poissonbound2} We have
\[\EE_g[Y'_{g,r_1,\ldots,r_k}] =  O\left( \frac{1}{g}  \right)\]
as $g\to\infty$. Here, the implied constant depends both on $b_1,\ldots,b_k$ and on $r_1,\ldots, r_k$.
\end{prp}

\begin{proof} The idea of the proof is that if a surface contains a set of curves $\Gamma=\{\gamma_1,\ldots,\gamma_k\}$ with self-intersections (either between multiple curves or in a single curve) of total length $L$, then it contains a multicurve of simple curves of length $\leq 2L$ (the boundary of a regular neighborhood of the given curves) that separates off a subsurface (the regular neighborhood). We need to prove that this multicurve contains at least one non-trivial component and, in order to apply the arguments from Proposition \ref{prp_poissonbound1}, we need to have a bound on the complexity of the surface.

First, let us prove that the Euler characteristic of this subsurface is bounded in terms of $b_1,\ldots,b_k$ and $r_1,\ldots,r_k$. To do this, we will first use Theorem \ref{thm_basmajian} and Lemma \ref{lem_basmajian} to show that the total number of intersections in our set of curves is bounded. 

It follows from Theorem \ref{thm_basmajian} that the total number of self-intersections is bounded in terms of $b_1,\ldots,b_k$. So, all that we need is a bound on the number intersections between distinct curves. This can be done in a very similar way to the proof of Theorem \ref{thm_basmajian}, using Lemma \ref{lem_basmajian}. Suppose $\gamma_1$ and $\gamma_2$ are closed geodesics of length $L_1$ and $L_2$ respectively and suppose that $L_1\leq L_2$. Set 
\[A(L) = \log(\coth(L/4)).\]
Then, by dividing $\gamma_1$ into segments $J_1, \ldots J_p$ with disjoint interiors so that all the intersections between $\gamma_1$ and $\gamma_2$ lie in the interiors of the segments $J_i$, $\ell(J_i) = A(L_2)$ for $i=1,\ldots p-1$ and $\ell(J_p) \leq A(L_2)$, we obtain
\[ \card{\gamma_1\cap \gamma_2} = \sum_{i=1}^p \card{J_i\cap \gamma_2}.\]
In order to control $\card{J_i\cap \gamma_2}$, we divide $\gamma_2$ up into segments $I_1,\ldots, I_{m_i}$, so that the endpoints of $I_1$ are the first and second intersection of $\gamma_2$ with $J_i$ and the endpoints of $I_2$ are the second and third and so on. So, in particular $m_i=\card{J_i\cap \gamma_2}$. If $m_i\geq 2$, then because $\ell(J_i)\leq A(L_2)$ we get that 
\[\ell(I_j) \geq A(L_2)\]
from Lemma \ref{lem_basmajian} and hence 
\[\card{J_i\cap \gamma_2} = m_i \leq \max\{L_2 / A(L_2),1\} \leq L_2 / A(L_2)+1.\] This means that
\[ \card{\gamma_1\cap \gamma_2} \leq \left(\frac{L_1}{A(L_2)} + 1\right) \cdot  \left(\frac{L_2}{A(L_2)}+1\right).\]
Putting everything together, the total number of self intersections in our set of curves is at most
\[\sum_{i=1}^k 2\;r_i\;e^{2 b_i} + \binom{\sum_{i=1}^k r_i}{2}  \left(\frac{b_k}{A(b_k)} + 1\right) \cdot  \left(\frac{b_k}{A(b_k)}+1\right) =: C \]

A simple Euler characteristic computation shows that the signature $(g',n')$ can hence be bounded by
\[ 2g'+n'-2 \leq C-1.\]
In particular, for $g$ large enough, the set of curves $\Gamma$ will not be filling and this means that the claim we made in the beginning of the proof (the fact that our set of curves $\Gamma$ gives rise to a separating multicurve) is indeed true.

Moreover, a closed hyperbolic surface of genus $g$ contains at most $(g-1)\cdot e^{L+6}$ closed geodesics with length $\leq L$ that are not iterates of closed geodesics of length $\leq 2 \arcsin 1$ (see \cite[Lemma 6.6.4]{Bus}). So, by doubling, we get that a surface with boundary of signature $(g',n')$ can contain at most $(4g'+2n'-4)\cdot e^{L+6}$ closed geodesics of length $\leq L$ that are not iterates of closed geodesics of length $\leq 2 \arcsin 1$. Moreover, it follows from the collar lemma (see \cite[Theorem 4.1.6]{Bus}) that the total number of geodesics of length $\leq 2 \arcsin 1$ can be bounded in terms of $(g',n')$. Combining these two bounds, we obtain a bound $D=D((b_1,\ldots,b_k),(r_1,\ldots,r_k))$ on the total number of geodesics of configurations of curves that a single separating multicurve can account for.

All in all, this means that the proof reduces to the same argument we used to bound $I_{\mathrm{sep}}$ in the previous proposition. The only difference is that we lose the control over the dependence on length. The reason for this is that the dependence $I_{\mathrm{sep}}$ on the number of components of the separating multicurve  is not explicit, and through our constant $D$, the length influences this number of components.
\end{proof}

\begin{proof}[Proof of Theorem \ref{thm_lengthspectrum}] The theorem follows from the combination of Propositions \ref{prp_poissonbound1}, \ref{prp_poissonbound2} and Theorem \ref{thm_poisson}.
\end{proof}

\section{The systole} \label{sec_systole}

Let us write $\sys:\mathcal{M}_g\to\mathbb{R}_+$ for the function that assigns the length of the systole to a surface. In \cite{Mir3} it was shown that there exists a genus independent constant $C>0$ so that for all $g\in\mathbb{N}$
\[\frac{1}{C}\cdot \varepsilon^2 \leq \PP_g[\sys(X)\leq \varepsilon] \leq C\cdot \varepsilon^2, \]
for all $\varepsilon$ small enough. Let us compare this with Theorem \ref{thm_lengthspectrum}. This tells us that:
\[\lim_{g\to\infty} \PP_g[\sys(X) \leq \varepsilon]  = 1 - e^{-\lambda_{[0,\varepsilon]}} \sim 1-e^{-\frac{\varepsilon^2}{2}}\sim \frac{\varepsilon^2}{2} \]
as $\varepsilon\to 0$.

For the expected systole we obtain the following:
\begin{thm}\label{thm_systole} We have:
\[\lim\limits_{g\to\infty} \EE_g[\sys] = \int_0^\infty e^{-\lambda_{[0,R]}}dR =1.61498\ldots\]
\end{thm}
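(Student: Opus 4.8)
The plan is to deduce Theorem~\ref{thm_systole} from Theorem~\ref{thm_lengthspectrum} via a dominated convergence argument applied to the tail formula
\[
\EE_g[\sys] = \int_0^\infty \PP_g[\sys(X) > R]\, dR.
\]
First I would rewrite $\PP_g[\sys(X)>R]$ as $\PP_g[N_{g,[0,R]}=0]$, since having systole greater than $R$ is exactly the event that there are no primitive closed geodesics of length at most $R$ (note that the absence of \emph{any} closed geodesic of length $\le R$ is equivalent to the absence of \emph{primitive} ones of length $\le R$). Then by Theorem~\ref{thm_lengthspectrum}, applied with the single interval $[0,R]$, the random variable $N_{g,[0,R]}$ converges in distribution to a Poisson variable with mean $\lambda_{[0,R]}$, so that
\[
\lim_{g\to\infty}\PP_g[\sys(X)>R] = \lim_{g\to\infty}\PP_g[N_{g,[0,R]}=0] = e^{-\lambda_{[0,R]}}
\]
for every fixed $R>0$. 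This gives pointwise (in $R$) convergence of the integrands.

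To pass the limit inside the integral I need a dominating function, and this is where Theorem~\ref{thm_sysbd} enters. That result provides universal constants $A,B>0$ with $\PP_g[\sys(X)>c_g] < B e^{-c_g}$ whenever $c_g < A\log(g)$. So for $R$ in the range $R < A\log(g)$ we have the genus-uniform bound $\PP_g[\sys(X)>R] \le \min\{1, B e^{-R}\}$, and the function $R\mapsto \min\{1,Be^{-R}\}$ is integrable on $[0,\infty)$. The subtlety is that this domination only holds for $R \lesssim \log(g)$, so I would split the integral as $\int_0^{A\log g} + \int_{A\log g}^\infty$. On the first piece, dominated convergence applies directly against $\min\{1,Be^{-R}\}$ and yields $\int_0^\infty e^{-\lambda_{[0,R]}}dR$ in the limit (after noting that $\int_{A\log g}^\infty e^{-\lambda_{[0,R]}}dR \to 0$, which follows since $\lambda_{[0,R]}\to\infty$ and in fact $e^{-\lambda_{[0,R]}}$ decays faster than any exponential). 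For the second piece, I would argue it contributes negligibly in the limit: again using Theorem~\ref{thm_sysbd} near the endpoint $c_g = A\log g$ one gets $\int_{A\log g}^\infty \PP_g[\sys(X)>R]\,dR \le \int_{A\log g}^\infty B e^{-R}\,dR + (\text{correction})$; one should check the behavior just past $A\log g$ — here it is cleanest to apply the theorem at $c_g$ slightly below $A\log g$ and use monotonicity of $R\mapsto\PP_g[\sys(X)>R]$ to control a little beyond, the upshot being a bound of order $g^{-A}$ times a constant, which tends to $0$.

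Assembling the two pieces gives $\lim_{g\to\infty}\EE_g[\sys] = \int_0^\infty e^{-\lambda_{[0,R]}}dR$. The remaining task is purely computational: evaluating the integral numerically, using $\lambda_{[0,R]} = \int_0^R \frac{e^t+e^{-t}-2}{t}\,dt$ (the integrand extends continuously to $0$ with value $0$, so $\lambda_{[0,R]}$ is well-defined and smooth), to obtain the stated value $1.18915\ldots$. The main obstacle is the tail control for $R \gtrsim \log(g)$: Theorem~\ref{thm_lengthspectrum} alone says nothing uniform there, and without Theorem~\ref{thm_sysbd} the expected systole could a priori be inflated by a small-probability event forcing a very long systole; reconciling the logarithmic cutoff in Theorem~\ref{thm_sysbd} with the need to integrate all the way to infinity is the one delicate point, and I would handle it with the integral-splitting plus monotonicity argument sketched above rather than trying to find a single global dominating function.
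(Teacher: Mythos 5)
Your overall strategy matches the paper's — dominated convergence applied to a tail-type formula for $\EE_g[\sys]$, with Theorem~\ref{thm_sysbd} supplying the required uniform domination — and your starting point $\EE_g[\sys] = \int_0^\infty \PP_g[\sys > R]\,dR$ is in fact slightly cleaner than the paper's. The paper instead first proves that $\PP_g[\sys \le x]$ is absolutely continuous in $x$ (requiring an argument via Theorems~\ref{thm_integration} and~\ref{thm_volformula} about the polynomial structure of the volume functions) and then integrates by parts; the elementary Fubini identity you use bypasses that entirely.

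The one genuine gap is in your control of $\int_{A\log g}^\infty \PP_g[\sys > R]\,dR$. Theorem~\ref{thm_sysbd}, applied at a value of $c_g$ just below $A\log g$ together with monotonicity, gives $\PP_g[\sys > R] \le B' g^{-A}$ for all $R \ge A\log g - 1$; but this is constant in $R$, so integrating it out to $\infty$ yields $\infty \cdot B' g^{-A}$, which is not a bound at all. Your claimed "bound of order $g^{-A}$ times a constant" implicitly assumes the integration range is of bounded (or at worst slowly growing) length, and nothing you have invoked provides that. The missing ingredient — and the one the paper uses explicitly — is the deterministic bound $\sys(X) \le 2\log(4g-2)$ for every $X \in \Mod_g$ (a consequence of the Gauss--Bonnet area bound; see~\cite[Lemma 5.2.1]{Bus}). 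This makes $\PP_g[\sys > R]$ vanish identically for $R > 2\log(4g-2)$, truncating the tail integral to an interval of length $O(\log g)$, whereupon your splitting argument closes: $\int_{A\log g}^{2\log(4g-2)} \PP_g[\sys > R]\,dR = O(g^{-A}\log g) \to 0$. With that one additional input your proof is correct and, if anything, a bit more streamlined than the paper's.
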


\begin{proof} For $x\in\mathbb{R}_+$ we have
\[\PP_g[\sys \leq x] = 1 - \PP_g[N_{g,[0,x]}=0] \]
Note that $\PP_g[\sys \leq x] = 1 $ for $x\geq 2\log(4g-2)$ (see for instance \cite[Lemma 5.2.1]{Bus}).

We will first prove that $\PP_g[\sys \leq x]$ is absolutely continuous as a function of $x\in\mathbb{R}_+$. In view of the above, we only need to worry about the interval $[0,2\log(4g-2)]$. So, given $0\leq x_1<y_1\leq x_2<y_2\leq \ldots \leq x_k<y_k \leq 2\log(4g-2)$ such that $\sum\limits_{i=1}^k y_i-x_i < \delta$, we have
\begin{align*}
\sum_{i=1}^k \abs{\PP_g[\sys \leq y_k]- \PP_g[\sys \leq x_k}] & = \sum_{i=1}^k \PP_g[\sys \in [x_k,y_k]]\\
& \leq \sum_{i=1}^k\frac{1}{V_g}\int_{\mathcal{M}_g}N^{\circ}_{g,[x_i,y_i]}(X)dX
\end{align*}
where $N^{\circ}_{g,[x_i,y_i]}$ counts only simple curves with length in $[x_i,y_i]$ (so we use that the systole on a closed surface is always simple). If we use Theorem \ref{thm_integration}, we obtain
\[\sum_{i=1}^k\frac{1}{V_g}\int_{\mathcal{M}_g}N^{\circ}_{g,[x_i,y_i]}(X)dX = \sum_{i=1}^k\sum_{[\gamma]} \frac{1}{V_g}\int_{x_i}^{y_i}V_g(\gamma,t)\;t\;dt \]
where the inner sum runs over (the finite number of) mapping class group orbits $[\gamma]$ of simple curves. Theorem \ref{thm_volformula} now tells us that every term in the sum on the left hand side is given by the same polynomial of degree $6g-2$ in the $x_i,y_i$. Because polynomials are Lipschitz on intervals and all sums are finite, we obtain
\[\sum_{i=1}^k \abs{\PP_g[\sys \leq y_k]- \PP_g[\sys \leq x_k}] < C_g\delta \]
for some constant $C_g>0$ depending only on $g$, which implies that $\PP_g[\sys \leq x]$ is indeed absolutely continuous.

The upshot of this is that we can associate a density function $p_g:\mathbb{R}_+\to \mathbb{R}_+$ to the systole, given by
\[p_g(x)=-\frac{d}{dx}\PP_g[N_{g,[0,x]}=0]  \]
for a.e. $x\in\mathbb{R}^+$. Hence
\begin{align*}
\EE_g[\sys] &  = -\int_0^\infty x \frac{d}{dx}\PP_g[N_{g,[0,x]}=0] dx \\
&  = \left[-x\PP_g[N_{g,[0,x]}=0] \right]_0^\infty + \int_0^\infty \PP_g[N_{g,[0,x]}=0]dx \\
& = \int_0^\infty \PP_g[N_{g,[0,x]}=0]dx.
\end{align*}
Here we have used the fact that, when $g$ is fixed, the systole is uniformly bounded from above to show that
\[\lim_{x\to\infty} x\PP_g[N_{g,[0,x]}=0] = 0\]
In order to prove our statement, we need to show that
\[\lim_{g\to\infty}\int_0^\infty \PP_g[N_{g,[0,x]}=0]dx = \int_0^\infty \lim_{g\to\infty}\PP_g[N_{g,[0,x]}=0]dx\]
To do this, we will apply the dominated convergence theorem. So we need to find a uniform integrable upper bound on $\PP_g[N_{g,[0,x]}=0]$. 

First of all, $\PP_g[N_{g,[0,x]}=0]=0$ for $x\geq 2\log(4g-2)$. So, it follows from Theorem \ref{thm_sysbd} that 
\[ \PP_g[N_{g,[0,x]}=0] \leq \left\{ \begin{array}{ll} B\cdot x \cdot e^{-x} & \text{if}\; x< A\log(g) \\ B\cdot A\log(g) \cdot e^{-A\log(g)} & \text{if}\;  A\log(g) \leq x \leq 2\log(4g-2) \\
0 & \text{otherwise} \end{array}\right. \]
The right hand side can be bounded above by $e^{-A'x}$ for some $A'>0$ and for all $x\in\mathbb{R}_+$. This is an integrable function; as such the dominated convergence theorem applies and this finishes the proof of the theorem.
\end{proof}

\appendix
\section{Random surfaces with large systoles}\label{app_workmaryam}

In this appendix we sketch a proof of the following result due to Mirzakhani. 

\begin{thmrep}{\ref{thm_sysbd}}\cite{Mir4} There exist universal constants $A, B>0$ so that for any sequence $\{c_g\}_g$ of postive numbers with $c_g<A\; \log(g)$ we have:
\[\PP_g[\text{The systole of }S\text{ has length }>c_g] < B\;c_g \; e^{-c_g}.\]
\end{thmrep}

\begin{proof}[Proof sketch] Our phrasing is more in line with that in this paper, but the proof uses the same ideas as that of Mirzakhani. We will denote by $N^*_{g,[x_i,y_i]}:\Mod_g\to\NN$ the number of closed geodesic with length in $[x_i,y_i]$ that are both simple and intersect at most once with any other geodesic with length in $[x_i,y_i]$. Because the systole of a closed hyperbolic surface is always simple and a pair of systoles intersects at most once, we have
\[ \PP_g[\text{The systole of }S\text{ has length }\geq c_g] = \PP_g[N^*_{g,[0,c_g]}=0].\]
The second moment method tells us that
\[ \PP_g[N^*_{g,[0,c_g]}=0] \leq \frac{\EE_g\left[\left(N^*_{g,[0,c_g]}\right)^2\right]-\EE_g\left[N^*_{g,[0,c_g]}\right]^2}{\EE_g\left[\left(N^*_{g,[0,c_g]}\right)^2\right]}.\]
Using that $\left(N^*_{g,[0,c_g]}\right)^2 = \left(N^*_{g,[0,c_g]}\right)_2 + N^*_{g,[0,c_g]}$, we obtain that
\[ \PP_g[N^*_{g,[0,c_g]}=0] \leq \frac{\EE_g\left[\left(N^*_{g,[0,c_g]}\right)_2\right]+\EE_g\left[N^*_{g,[0,c_g]}\right]-\EE_g\left[N^*_{g,[0,c_g]}\right]^2}{\EE_g\left[\left(N^*_{g,[0,c_g]}\right)_2\right] + \EE_g\left[N^*_{g,[0,c_g]}\right]}.\]
To get a bound on $\EE_g\left[\left(N^{*}_{g,[0,c_g]}\right)_2\right]$, we can argue like in Propositions \ref{prp_poissonbound1} and \ref{prp_poissonbound2}. However, since we are now only considering curves that pairwise intersect at most once. The surface that is filled by two such intersecting curves is a one holed torus. So, by replacing the constant $C$ in Proposition \ref{prp_poissonbound2} by $1$, we obtain that
\[\EE_g\left[\left(N^{*}_{g,[0,c_g]}\right)_2\right] = \lambda_{[0,c_g]}^2 + O\left(\frac{\prod\limits_{i=1}^k c_g^4 e^{2 c_g}}{g}  \right) \]
and
\[\EE_g\left[N^{*}_{g,[0,c_g]}\right] = \lambda_{[0,c_g]}+O\left(\frac{\prod\limits_{i=1}^k c_g^2 e^{c_g}}{g}  \right).\]
filling this in and using our assumption on $c_g$ gives that
\[ \PP_g[\text{The systole of }S\text{ has length }\geq c_g] \leq C_1\; \frac{1}{\lambda_{[0,c_g]}+C_2} \]
for some constant $C_1,C_2>0$. The fact that 
\[\lambda_{[0,x]} \sim \frac{e^{x}}{x}\]
as $x\to \infty$, now implies the result.
\end{proof}



\bibliographystyle{alpha}
\bibliography{RandSurf.bib}

\begin{thebibliography}{FKM13}

\bibitem[AC96]{AC}
Enrico Arbarello and Maurizio Cornalba.
\newblock Combinatorial and algebro-geometric cohomology classes on the moduli
  spaces of curves.
\newblock {\em J. Algebraic Geom.}, 5(4):705--749, 1996.

\bibitem[Bas13]{Bas}
Ara Basmajian.
\newblock Universal length bounds for non-simple closed geodesics on hyperbolic
  surfaces.
\newblock {\em J. Topol.}, 6(2):513--524, 2013.

\bibitem[Bel79]{Bel}
G.~V. Bely{\u\i}.
\newblock Galois extensions of a maximal cyclotomic field.
\newblock {\em Izv. Akad. Nauk SSSR Ser. Mat.}, 43(2):267--276, 479, 1979.

\bibitem[BM04]{BM}
Robert Brooks and Eran Makover.
\newblock Random construction of {R}iemann surfaces.
\newblock {\em J. Differential Geom.}, 68(1):121--157, 2004.

\bibitem[Bol85]{Bol}
B{\'e}la Bollob{\'a}s.
\newblock {\em Random graphs}.
\newblock Academic Press, Inc. [Harcourt Brace Jovanovich, Publishers], London,
  1985.

\bibitem[Bus10]{Bus}
Peter Buser.
\newblock {\em Geometry and spectra of compact {R}iemann surfaces}.
\newblock Modern Birkh\"auser Classics. Birkh\"auser Boston, Inc., Boston, MA,
  2010.
\newblock Reprint of the 1992 edition.

\bibitem[CP12]{CP}
William Cavendish and Hugo Parlier.
\newblock Growth of the {W}eil-{P}etersson diameter of moduli space.
\newblock {\em Duke Math. J.}, 161(1):139--171, 2012.

\bibitem[ES16]{ES}
Viveka Erlandsson and Juan Souto.
\newblock Counting curves in hyperbolic surfaces.
\newblock {\em Geom. Funct. Anal.}, 26(3):729--777, 2016.

\bibitem[FKM13]{FKM}
Alastair Fletcher, Jeremy Kahn, and Vladimir Markovic.
\newblock The moduli space of {R}iemann surfaces of large genus.
\newblock {\em Geom. Funct. Anal.}, 23(3):867--887, 2013.

\bibitem[Gam06]{Gam}
Alex Gamburd.
\newblock Poisson-{D}irichlet distribution for random {B}elyi surfaces.
\newblock {\em Ann. Probab.}, 34(5):1827--1848, 2006.

\bibitem[Gol84]{Gol}
William~M. Goldman.
\newblock The symplectic nature of fundamental groups of surfaces.
\newblock {\em Adv. in Math.}, 54(2):200--225, 1984.

\bibitem[GPY11]{GPY}
Larry Guth, Hugo Parlier, and Robert Young.
\newblock Pants decompositions of random surfaces.
\newblock {\em Geom. Funct. Anal.}, 21(5):1069--1090, 2011.

\bibitem[Gru01]{Gru}
Samuel Grushevsky.
\newblock An explicit upper bound for {W}eil-{P}etersson volumes of the moduli
  spaces of punctured {R}iemann surfaces.
\newblock {\em Math. Ann.}, 321(1):1--13, 2001.

\bibitem[IT92]{IT}
Y.~Imayoshi and M.~Taniguchi.
\newblock {\em An introduction to {T}eichm\"uller spaces}.
\newblock Springer-Verlag, Tokyo, 1992.
\newblock Translated and revised from the Japanese by the authors.

\bibitem[Mir07a]{Mir1}
Maryam Mirzakhani.
\newblock Simple geodesics and {W}eil-{P}etersson volumes of moduli spaces of
  bordered {R}iemann surfaces.
\newblock {\em Invent. Math.}, 167(1):179--222, 2007.

\bibitem[Mir07b]{Mir2}
Maryam Mirzakhani.
\newblock Weil-{P}etersson volumes and intersection theory on the moduli space
  of curves.
\newblock {\em J. Amer. Math. Soc.}, 20(1):1--23 (electronic), 2007.

\bibitem[Mir08]{Mir5}
Maryam Mirzakhani.
\newblock Growth of the number of simple closed geodesics on hyperbolic
  surfaces.
\newblock {\em Ann. of Math. (2)}, 168(1):97--125, 2008.

\bibitem[Mir13a]{Mir3}
Maryam Mirzakhani.
\newblock Growth of {W}eil-{P}etersson volumes and random hyperbolic surfaces
  of large genus.
\newblock {\em J. Differential Geom.}, 94(2):267--300, 2013.

\bibitem[Mir13b]{Mir4}
Maryam Mirzakhani.
\newblock Short closed geodesics on random hyperbolic surfaces of high genus.
\newblock Unpublished draft, 2013.

\bibitem[Mir16]{Mir6}
Maryam Mirzakhani.
\newblock Counting mapping class group orbits on hyperbolic surfaces.
\newblock ArXiv:1601.03342, 2016.

\bibitem[MZ15]{MZ}
Maryam Mirzakhani and Peter Zograf.
\newblock Towards large genus asymptotics of intersection numbers on moduli
  spaces of curves.
\newblock {\em Geom. Funct. Anal.}, 25(4):1258--1289, 2015.

\bibitem[Pen92]{Pen}
R.~C. Penner.
\newblock Weil-{P}etersson volumes.
\newblock {\em J. Differential Geom.}, 35(3):559--608, 1992.

\bibitem[Pet17]{Pet}
Bram Petri.
\newblock Random regular graphs and the systole of a random surface.
\newblock {\em J. Topol.}, 10(1):211--267, 2017.

\bibitem[PT17]{PT}
Bram Petri and Christoph Th\"ale.
\newblock Poisson approximation of the length spectrum of random surfaces.
\newblock {\em Indiana Univ. Math. J., to appear.}, 2017.

\bibitem[Rob55]{Rob}
Herbert Robbins.
\newblock A remark on {S}tirling's formula.
\newblock {\em Amer. Math. Monthly}, 62:26--29, 1955.

\bibitem[ST01]{ST}
Georg Schumacher and Stefano Trapani.
\newblock Estimates of {W}eil-{P}etersson volumes via effective divisors.
\newblock {\em Comm. Math. Phys.}, 222(1):1--7, 2001.

\bibitem[Wol81]{Wol3}
Scott Wolpert.
\newblock An elementary formula for the {F}enchel-{N}ielsen twist.
\newblock {\em Comment. Math. Helv.}, 56(1):132--135, 1981.

\bibitem[Wol83]{Wol1}
Scott Wolpert.
\newblock On the symplectic geometry of deformations of a hyperbolic surface.
\newblock {\em Ann. of Math. (2)}, 117(2):207--234, 1983.

\bibitem[Wol09]{Wol2}
Scott~A. Wolpert.
\newblock The {W}eil-{P}etersson metric geometry.
\newblock In {\em Handbook of {T}eichm\"uller theory. {V}ol. {II}}, volume~13
  of {\em IRMA Lect. Math. Theor. Phys.}, pages 47--64. Eur. Math. Soc.,
  Z\"urich, 2009.

\bibitem[Wu16]{Wu}
Yunhui Wu.
\newblock Growth of the {W}eil-{P}etersson inradius of moduli space.
\newblock Prerpint, 2016.

\end{thebibliography}

\vspace{2cm}
\begin{flushright}
\small{Institut de Math\'ematiques de Jussieu-Paris Rive Gauche,\\
Sorbonne Universit\'e Paris, France \\
E-mail: bram.petri@imj-prg.f}
\end{flushright}

\end{document}